\documentclass{article}
\usepackage{amsmath,amssymb,amsthm}
\usepackage{amscd}    % For the CD environment
\usepackage{graphicx}
\usepackage{enumitem}
\usepackage{float}
\usepackage{tikz}
\usetikzlibrary{arrows.meta,chains, positioning}
\usetikzlibrary{shapes.geometric, arrows}
\usepackage{hyperref}
\usepackage{graphicx}
\newtheorem{definition}{Definition}
\newtheorem{theorem}{Theorem}
\newtheorem{proposition}{Proposition}

\newtheorem{lemma}{Lemma}
\newtheorem{remark}{Remark}
\newtheorem{example}{Example}

\title{The secant quandle: a virtual invariant of classical braids and knots}
\author{
  Seongjeong Kim\thanks{Email: \texttt{kimseongjeong@jlu.edu.cn}}%
  \quad
  Yangzhou Liu\thanks{Email: \texttt{yangzhou.liu@phystech.edu}}%
  \quad
  Vassily O. Manturov\thanks{Email: \texttt{vomanturov@yandex.ru}}
}

\begin{document}
\maketitle
\begin{abstract}
We construct a novel invariant of braids and knots, called the \textbf{secant-quandle} (\textbf{SQ}), derived from homotopy classes of generic secants and generic horizontal trisecants. This invariant provides a natural generalization of the usual knot quandle, capturing richer topological information.
\end{abstract}

\textbf{Keywords:} knot, braid, invariant, quandle, trisecant.

\textbf{AMS MSC: 57M25, 57M27, 20F36} 

\section{Introduction}

The most popular approach to classical knot theory deals with standard planar projections with crossings and arcs connecting them. In contrast, T. Fiedler and A. Mortier considered only triple points to build their elegant theory \cite{Fiedler} based upon the singularity theory of V. Vassiliev \cite{Manturov}. This establishes that trisecants play an important role in knot theory. V. O. Manturov's group $G_n^k$ directly corresponds to this perspective. He suggests considering two sets: 
\begin{itemize}
    \item the 0-dimension set $\mathcal{T}_K$ of generic trisecants (like crossings).
    \item  1-dimension set of generic secant homotopy classes $\mathcal{S}_k$ (like arcs).
\end{itemize}

A quandle originates from coloring invariants: at each crossing, there are constraints on the coloring of the arcs $a$, $b$, and $c$ that form it, where the arcs can be either of the same color or different colors. The secant-quandle introduced in this paper can be regarded as coloring each homotopy class of horizontal secant, and the horizontal trisecant plays a role similar to that of a crossing. 

The paper is organized as follows. In Section 2 we introduce some basic notation and definitions. In Section 3 we define the secant quandle for braids and knots and prove its invariance. In Section 4 we give some properties of $SQ$. In particular, we show that $SQ(L \sqcup O)$, where $O$ is the trivial knot, contains a subquandle isomorphic to the knot quandle $Q(L)$.

%%%%%%%%%%%%%%%%%%%%%%%%%%%%%%%%%%%%%%%%%%%%%%%%%%% Section 2
\section{Secant-quandle of braids and knots}
We construct a novel invariant of braids and links, \textbf{secant-quandle} (\textbf{SQ}), with generic secants serving as generators and generic horizontal trisecants serving as relations, i.e., $SQ(M)=\Gamma\langle \mathcal{S}_M\mid\mathcal{T}_M,\mathcal{E}_M\rangle$, where $M$ is a braid or link. 

Let $\beta =\{\beta_{j}\}_{j=1}^{n}$ be a braid in $\mathbb{C}\times [0,1]$. Let $j_{t} := \mathbb{C}\times \{t\} \cap \beta_{j}$, specifically, $j_0$ and $j_1$ denote the points at the top and bottom of the braid respectively. We assume that $\{j_0\}_{j=1}^n$ and $\{j_1\}_{j=1}^n$ are arranged in a sequence on the semicircle. Assume each strand is oriented, either upwards or downwards (independently for each strand).

A braid corresponds to $\binom{n}{2}$ ruled surfaces $$R_{jk}:=\{p \mid p\in  \overline{j_tk_t},t\in [0,1]\}.$$
We call an oriented line $\overline{j_tk_t}$ at $t$ from $j_{t}$ to $k_{t}$ a {\em secant}, denoted by $(j,k)_{t}$. A braid $\beta$ is \emph{generic} if each intersection point between the interior of any $R_{jl}$ and $ k_t$ is transverse at $k_{t_0}$. Then these intersection points are finite and one-to-one corresponding to \emph{generic} horizontal trisecants $(j,k,l)_{t_0}$. 

\begin{definition}\label{def1}
A \emph{film-frame} is an open disk $D$ on a ruled surface divided by two adjacent trisecants $(j^\alpha, k^\alpha, l^\alpha)$ ($\alpha=0,1$), where $\overline{D}\cap (j^\alpha, k^\alpha, l^\alpha)\in \{(j^\alpha, k^\alpha) ,(k^\alpha, l^\alpha) ,(j^\alpha, l^\alpha) \} $, as shown in Figure \ref{fig:1}. 
\end{definition}

\begin{figure}%[H]  % [H] 强制图片位置不浮动
    \centering
    \includegraphics[width=0.6\linewidth]{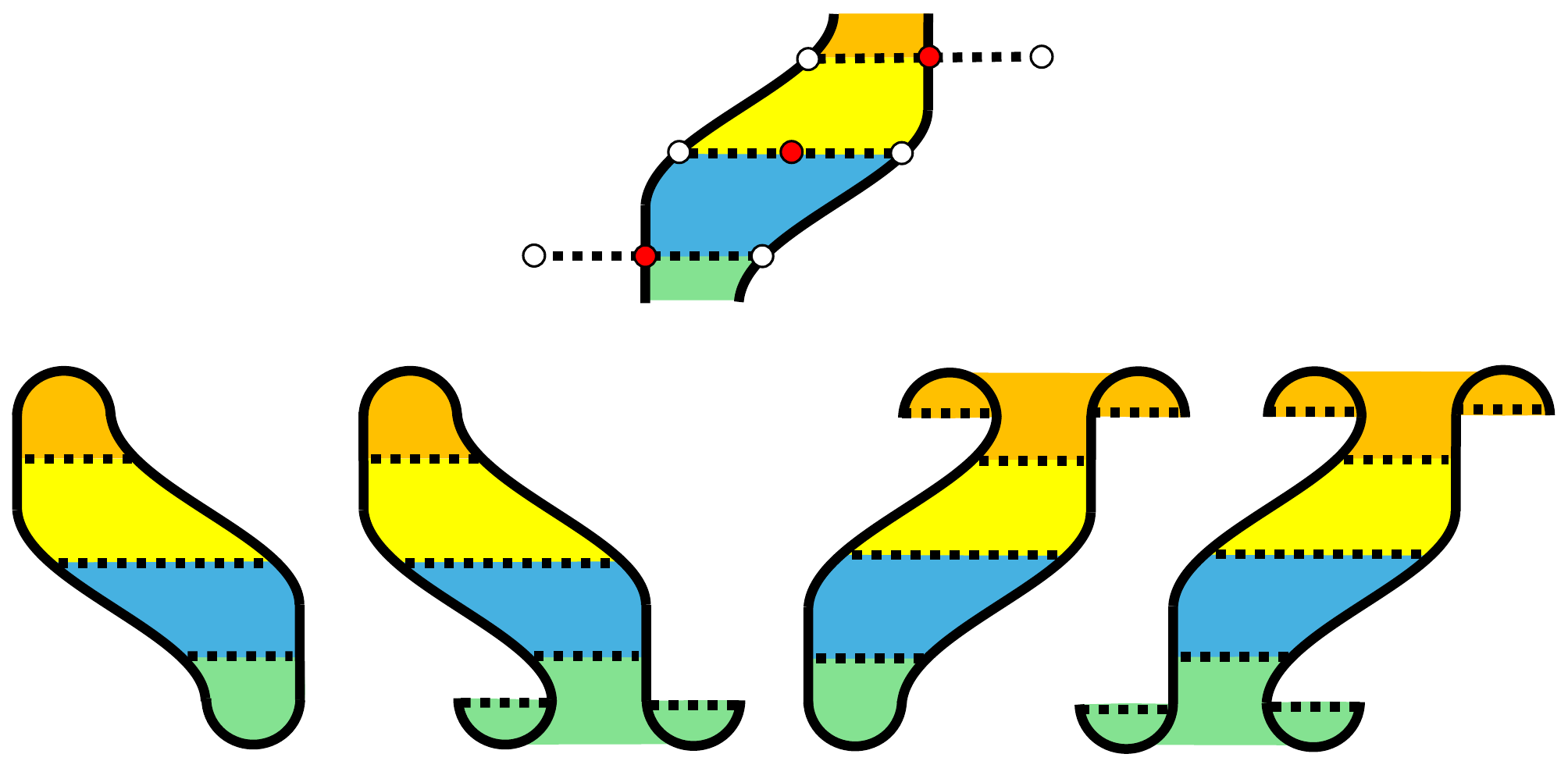}
    \caption{Trisecants divide $R_{ij}$  into several film-frames .}
    \label{fig:1}
\end{figure}

\begin{definition}\label{def2}
Two horizontal secants of a braid $\beta$ are \emph{local secant homotopic}, denoted by $(j,k) \overset{loc}{\sim} (j,k)'$, if there exist two adjacent film-frames $D_0\cup D_1\subset R_{jk}$ and a continuous family of horizontal secant $(j,k)(s)\subset  \overline{D_0\cup D_1}$, $s \in [0, 1]$ such that 
$$(j,k)(0)=(j,k) ,\quad (j,k)(1)=(j,k)'.$$  
$(j,k)\sim (j,k)'$ means that
$$(j,k)=(j^0,k^0)\overset{loc}{\sim}(j^1,k^1)\overset{loc}{\sim}\cdots \overset{loc}{\sim} (j^m,k^m)=(j,k)'.$$
\end{definition}

%\begin{remark}
%The definition can be slightly relaxed by permitting secants to degenerate to critical points at the endpoints ($t=0,1$). How does secant homotopy differ from the usual homotopy of secants? Consider a 1-punctured disc with two secants $e_\pm$, each homotopic to a top or bottom critical point of this disc, respectively. Under usual homotopy $e_+$ is equivalent to $e_-$,  whereas  secant homotopy $e_+\nsim e_-$. 
%\end{remark}

\begin{definition}\label{def3}
Let $\beta:=\{\beta_{j}\}_{j=1}^n$ be a braid. Assume that each strand is oriented.
The {\em secant quandle $SQ(\beta)$} is defined by
$$SQ(\beta):=\Gamma \langle \mathcal{S} _\beta\mid \mathcal{T}_\beta\rangle,$$
where $\mathcal{S}_\beta$ is the set of horizontal secant homotopy classes, $\mathcal{T}_\beta$ is the set of generic horizontal trisecants, and each trisecant $(j,k,l):=(j,k,l)_{t_0}$ gives a relation among nearby secants: If $\mathrm{sign} (j,k,l)_{t_0}:=\det ((j,l),Oz,T_{k}\beta)_{t_0}>0$, then we have two relations
\begin{eqnarray}
(j,k,l)&:& (j,l){_+} ={(j,l){_-}} \circ {(j,k)} \label{eq:1}\\
&& (l,j){_+}={(l,j){_-}}\mathbin{/}  {(l,k)},\label{eq:2}
\end{eqnarray}
and $(j,k){_+}\sim (j,k){_-}$, $(k,j){_+}\sim (k,j){_-}$, $(k,l){_+}\sim (k,l){_-}$, $(l,k){_+}\sim (l,k){_-}$, where $(j,k)_\pm:=(j,k)_{t_0\pm \epsilon}$.
If $\mathrm{sign} (j,k,l)<0$, then we interchange $\circ$ and $/$ in the above two equations. See Figure~\ref{figure2}. Note that if a trisecant $(j,k,l)$ has $\mathrm{sign}(j,k,l)<0$, then $\mathrm{sign}(l,k,j)>0$.
\end{definition}
\begin{figure}%[H]  % [H] 强制图片位置不浮动
    \centering
    \includegraphics[width=0.75\linewidth]{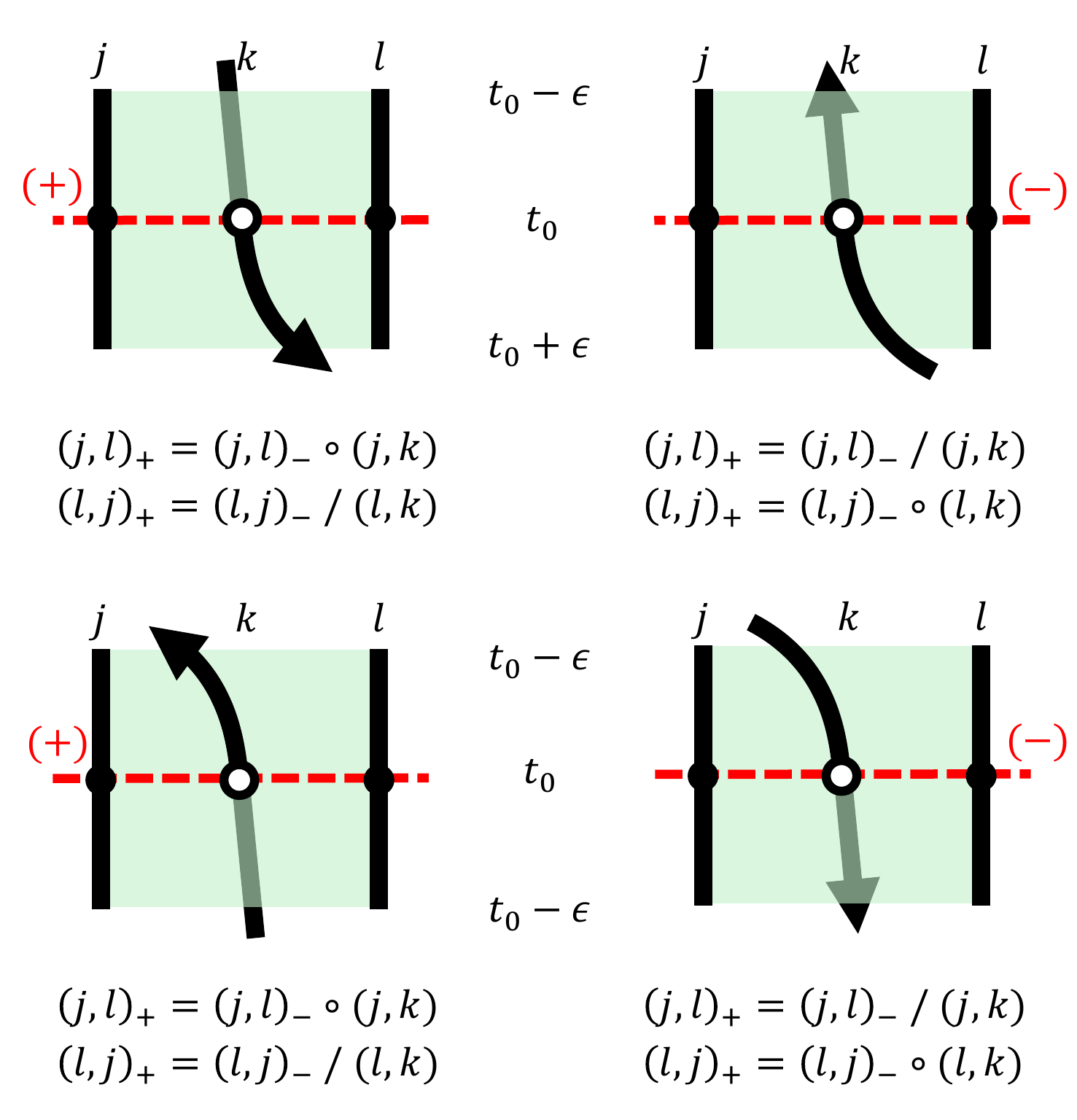}

    \caption{Relations coming from horizontal trisecants}
    \label{figure2}
\end{figure}
We will show that $SQ$ is a braid invariant in Theorem \ref{thm1}. %%%%%%%%%%%%% how about combine two sentences as one paragraph.
The definition of $SQ$ for knots is  more involved: We focus on plat closure of braids. 
\begin{definition}\label{def4}
  A knot $K:\mathbb{S}^1\hookrightarrow \mathbb{R}^3$ is a \emph{Birman} knot if $K$ is a \emph{plat closure} of a braid $\beta\in B_{2n}$ (Fig \ref{fig:2}), which has the following properties: 
  \begin{itemize}
      \item It is contained in the subset of $\mathbb{R}^3$ which is represented by the inequalities $- 0. 2 5 \leq  z \leq  1.25$.
      \item It meets the planes $z =-0.25$ and $ z=1.25$ in precisely $n$ points, and every other plane $z=z_0$, $-0.25 < z_0 < 1.25$ in precisely $2n$ points. In particular, $K \cap \mathbb{R}^{2}\times [0,1] = \beta$. We assume that strands in $K \cap \mathbb{R}^{2}\times [0,1]$ are ordered as braids and $n$ points in $K\cap \mathbb{R}^{2}\times \{-0.25\}$ ($K \cap \mathbb{R}^{2}\times \{1.25\}$ respectively) are numbered by $1,\dots,n$ from the left to the right. 
  \end{itemize}
\end{definition}

%All the knots we mentioned are based on the above braids $\beta\in B_{2n}$, and we consider their plat closures $K:=\hat{\beta}$, called \emph{Birman knot}, for which 

We have the following two propositions \cite{Birman}:

\begin{proposition}
Every link $L$ can be presented as a plat closure of a braid $\beta\in B_{2n}$. 
\end{proposition}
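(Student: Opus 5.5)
We would prove the statement via Alexander's theorem followed by an explicit conversion of a closed braid into a plat. The plan is as follows.

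\emph{Braid presentation.} By Alexander's theorem, every link $L$ is isotopic to the closure $\hat{\gamma}$ of some braid $\gamma\in B_m$. We realize $\hat\gamma$ concretely: the $m$ strands of $\gamma$ sit in $\mathbb{R}^2\times[0,1]$, and the closing arcs run from the top endpoints back around to the bottom endpoints on the side of the braid box.

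\emph{Conversion to a plat.} Next we replace each closing arc by a pair of caps. For a closing arc joining top point $i$ to bottom point $i$, we add a new vertical strand running parallel to the braid box (say to its right, in position $m+i$ after reordering). We cap it to strand $i$ at the top with a maximum, and cup it to strand $i$ at the bottom with a minimum. Doing this for all $i=1,\dots,m$ gives a braid $\beta\in B_{2m}$ on the $2m$ strands. This braid is $\gamma$ on the first $m$ strands, together with a fixed braid word that moves the returning strands into positions adjacent to their partners. After this rearrangement the top caps and bottom cups join consecutive pairs $(2r-1,2r)$, which is exactly the plat-closure pattern of Definition~\ref{def4}.

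Concretely, we will use the standard braid $\delta_m$ that interleaves $1,\dots,m$ with $m+1,\dots,2m$. We can then take $\beta=\delta_m^{-1}(\gamma\otimes 1_m)\delta_m$, possibly after adjusting for the orientation of the return strands. Both the top and bottom caps then pair strand $i$ with strand $m+i$, and the plat closure of $\beta$ is isotopic to $\hat\gamma$. The isotopy is obtained by sliding each returning vertical strand together with its two caps back into the original closing arc. This motion is a planar isotopy of the arc through the region outside the braid box, so it crosses no other strands.

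\emph{Height normalization.} Finally, we rescale in the $z$-direction so that the braid occupies $[0,1]$, and place the maxima at $z=1.25$ and the minima at $z=-0.25$. The caps are then monotone arcs, so every intermediate level meets $K$ in exactly $2m$ points and the levels $z=-0.25$ and $z=1.25$ meet it in exactly $m$ points.

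The main obstacle is the second step: verifying that the chosen permutation braid $\delta_m$ really produces the pairing $(2r-1,2r)$ at both ends. We must also check that the crossings of $\delta_m$ with its inverse, together with the caps, undo to the original closing arcs. We would check this by induction on $m$: peel off the outermost closing arc, which becomes the outermost cap pair, and apply the inductive hypothesis to the remaining $m-1$ arcs.
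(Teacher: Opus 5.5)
The paper gives no proof of this proposition; it quotes it from Birman. Your route is the standard argument for it: Alexander's theorem, then turning the closing arcs of $\hat\gamma$ into plat caps via $\beta=\delta_m^{-1}(\gamma\otimes 1_m)\delta_m$. The outline works, but the step you postpone carries all the content, and the justification you give for it is not right.

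\textbf{What actually has to be checked.} That the caps pair $(2r-1,2r)$, i.e.\ $r\leftrightarrow m+r$, is only a statement about permutations. The real claim is that the tangle formed by the top caps and $\delta_m^{-1}$ agrees, rel endpoints and up to a braid on the return strands, with the nested closing arcs. This depends on which lift of the interleaving permutation you use, and it fails for a bad lift. Take $m=2$ and $\gamma=\sigma_1$, whose closure is the unknot. With $\delta=\sigma_2$ you do get the unknot. With $\delta=\sigma_2^{3}$, which induces the same permutation, the plat closure of $\sigma_2^{-3}\sigma_1\sigma_2^{3}$ is a two-bridge knot of determinant $9$ (the rational-tangle fraction has denominator $9$), so it is not the unknot. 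You must therefore fix $\delta_m$ to be the shuffle in which every return strand passes behind (or every return strand passes in front of) the $\gamma$-strands, e.g.\ the positive permutation braid.

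\textbf{The isotopy is not planar.} Your sentence that the motion ``is a planar isotopy \dots so it crosses no other strands'' is false: inside $\delta_m$ the return strands cross the $\gamma$-strands in the diagram. The correct argument uses layering. For the shuffle, the return strands stay in a back layer and no two strands of the same layer cross. So, staying above the braid box and keeping endpoints fixed, you can straighten the top tangle: arc $r$ rises at $x=r$, runs across to the back, and descends at $x=m+r$. Push its crossings with the return legs downward, and the diagram becomes the nested caps followed by a braid $c$ (a half twist) acting on the return strands only.

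The bottom tangle, $\delta_m$ followed by the cups, is the mirror image of the top one in a horizontal plane, since that reflection sends a braid word $w$ to $w^{-1}$ and caps to cups. Hence it is $c^{-1}$ followed by nested cups. Because $1_m\otimes c$ commutes with $\gamma\otimes 1_m$, the two cancel along the return strands, leaving the standard closure $\hat\gamma$. This replaces your vague induction. The height normalization at the end is fine as written.
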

\begin{proposition}\label{prop2}
Let $L_1$ and $L_2$ be \emph{(stable) equivalent} links. Let $\beta_j \in B_{2n_j}$, $j=1,2$, such that the plat closure of $\beta_j$ is equivalent to $ L_j$. Then there exists $t \geq  \max(n_1, n_2)$ such that for each $n \geq t$, 
$$\beta_j'=\beta_j\sigma_{2n_j}\sigma_{2n_j+2}\cdots \sigma_{2n},\quad j=1,2$$
are in the same double coset of $B_{2n}$ modulo the subgroup $K_{2n}\subset B_{2n}$, where
$$K_{2n}:=\langle \sigma_1,\sigma_2\sigma_1^2\sigma_2,\sigma_{2k}\sigma_{2k-1}\sigma_{2k+1}\sigma_{2k}, 1\leq k\leq n-1\rangle.$$
The generators of $K_{2n}$ depicted in Figure \ref{fig:2}. 

\begin{figure}%[H]  % [H] 强制图片位置不浮动
    \centering
    \includegraphics[width=0.75\linewidth]{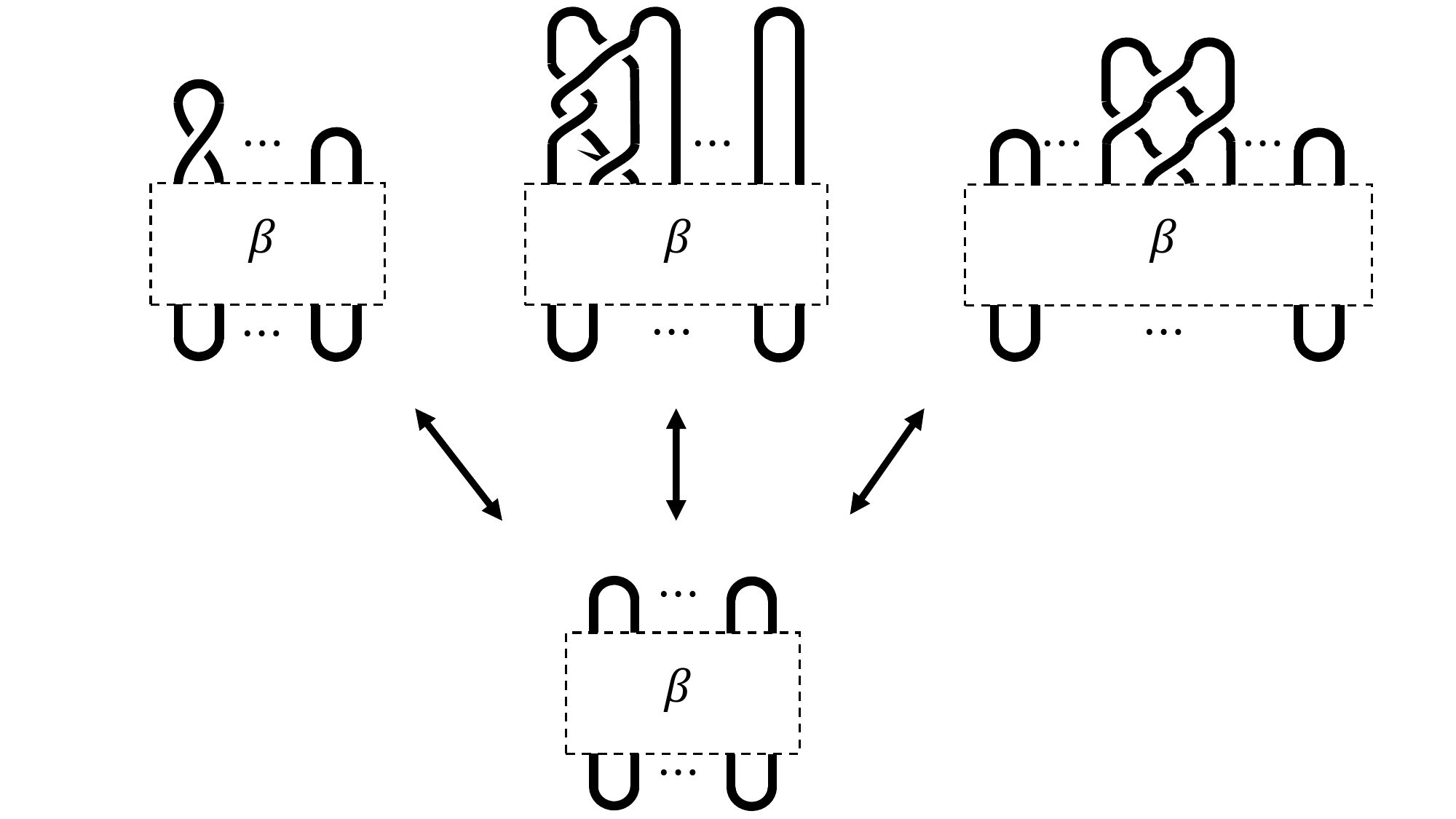}
    \caption{Birman knot and generators of $K_{2n}$.}
    \label{fig:2}
\end{figure}

\end{proposition}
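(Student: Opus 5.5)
Since this is Birman's stable plat equivalence theorem \cite{Birman}, I would follow the classical route and translate everything into the height function $z$ on $\mathbb{R}^3$. A plat presentation of $L$ by $\beta\in B_{2n}$ is a link in bridge position: exactly $n$ maxima (the caps at $z=1.25$) and $n$ minima (the cups at $z=-0.25$), and no other critical points. The proof has three parts.
\begin{itemize}
\item Identify the ambiguity in $\beta$ for a fixed bridge position.
\item Relate two bridge positions of the same link by a generic isotopy.
\item Show that all changes in the number of critical points can be absorbed into the stabilizations $\sigma_{2n_j}\sigma_{2n_j+2}\cdots\sigma_{2n}$.
\end{itemize}

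\textbf{Fixed number of bridges.} Cut $\mathbb{R}^3$ along $z=0$ and $z=1$. The bottom piece is a $3$-ball $B_-$ containing the trivial $n$-tangle of cups, and similarly the top piece $B_+$ contains the caps. A braid $\beta$ is a mapping class of the $2n$-punctured disk. Two braids $\beta,\beta'$ give isotopic plats through an isotopy preserving the bridge structure exactly when $\beta'=h_+\beta h_-$, where $h_\pm$ lie in the subgroup of $B_{2n}$ of mapping classes that extend over $(B_\pm,\text{trivial tangle})$. I would then prove (Hilden's theorem) that this subgroup is exactly $K_{2n}$. The inclusion $K_{2n}\subseteq$ stabilizer is checked directly from Figure \ref{fig:2}:
\begin{itemize}
\item $\sigma_1$ twists one cup;
\item $\sigma_2\sigma_1^2\sigma_2$ passes one cup around another;
\item $\sigma_{2k}\sigma_{2k-1}\sigma_{2k+1}\sigma_{2k}$ exchanges two adjacent cups.
\end{itemize}
For the reverse inclusion, the stabilizer maps onto the motion group of $n$ unknotted arcs in $B^3$. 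I would present that group via Dahm/Goldsmith-type generators: exchanges, pass-throughs and half-twists. The kernel should be generated by the $\sigma_{2k-1}$, which are conjugates of $\sigma_1$ within $K_{2n}$.

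\textbf{Changing the number of bridges.} Take an ambient isotopy from $L_1$ to $L_2$ and put it in general position with respect to $z$ in the Cerf-theoretic sense. At finitely many times a birth or death of a max--min pair occurs, or two critical values cross. Otherwise the isotopy is level-preserving, and so changes the presentation only by braid moves in the middle and by double-coset moves. A birth of a max--min pair on a strand can first be slid, via elements of $K_{2n}$ and braid conjugation, to the rightmost strand. There it is exactly right multiplication by $\sigma_{2n}$ after adding a new cup and cap pair. Critical values crossing can be rearranged by level-preserving isotopy, since the maxima can be pushed to the top and the minima to the bottom.

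\textbf{Main obstacle: commuting births past deaths.} The isotopy may destabilize at some stage. I would reorder the Cerf graphic so that all births precede all deaths, using the standard argument that a birth can be moved earlier in time unless it interacts with a death. Performing all births first yields a common $n$ with both $\beta_1'$ and $\beta_2'$ in bridge position with $n$ bridges. These two presentations are related by a level-preserving isotopy, and the previous step then places them in the same $K_{2n}$ double coset. Stability for all larger $n$ follows because further stabilization commutes with the coset relation: if $\beta_1'=h_+\beta_2'h_-$ with $h_\pm\in K_{2n}$, then the same $h_\pm$ lie in $K_{2n+2}$, and $\sigma_{2n+2}$ commutes with the old strands. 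The delicate parts I expect to be hardest are Hilden's generating set and the reordering of births and deaths. For both I would rely on the arguments in \cite{Birman}.
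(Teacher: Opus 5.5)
The step that fails is the one producing the specific normal form $\beta_j\sigma_{2n_j}\sigma_{2n_j+2}\cdots$. The paper itself gives no argument for it, only the citation to Birman.

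First, the indexing. Passing from $B_{2n}$ to $B_{2n+2}$ one appends $\sigma_{2n}$, as in Lemma~\ref{lemma4}. The product in the statement should therefore end with $\sigma_{2n-2}$, since $\sigma_{2n}\notin B_{2n}$. Your monotonicity argument appends $\sigma_{2n+2}$, which does commute with $K_{2n}$. But the generator actually appended is $\sigma_{2n}$. It involves the last old strand and does not commute with elements of $K_{2n}$ such as $\sigma_{2n-1}$ or the cap exchange $\sigma_{2n-2}\sigma_{2n-3}\sigma_{2n-1}\sigma_{2n-2}$. So $\beta_1'=h_+\beta_2'h_-$ does not yield $\beta_1'\sigma_{2n}\in K_{2n+2}\,\beta_2'\sigma_{2n}\,K_{2n+2}$.

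What you need is a lemma: right stabilization is well defined on double cosets. That is, stabilizing at whichever cap $h_-$ carries to the right end gives the same double coset as stabilizing at the rightmost cap. You use this lemma a second time, silently, when you slide a birth ``to the rightmost strand''. Conjugation is not a plat move, and the $K_{2n}$-moves replace $\beta$ by $h_+\beta h_-$, so you arrive at $(h_+\beta h_-)\sigma_{2n}$ rather than $\beta\sigma_{2n}$. For a knot the lemma holds, but it must be proved, e.g.\ by sliding the small zigzag along the knot from one cap to another through bridge positions.

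For links the lemma is false, and with it the statement in this exact form. Double-coset moves are realized by isotopies carrying caps to caps and cups to cups, so they preserve the number of maxima on each component. Each right stabilization adds a maximum to the component through the rightmost cap, and the new rightmost cap lies on that same component. Here caps sit at the end where right factors are attached, as in Lemma~\ref{lemma4}. Take $\beta_2=\sigma_2^3\in B_6$, whose plat closure is a trefoil on strands $1$--$4$ split from an unknot on strands $5,6$, and $\beta_1=\beta_2\,\sigma_4\sigma_3\sigma_5\sigma_4\in\beta_2K_6$. The exchange puts the trefoil at the rightmost cap of $\hat\beta_1$. Hence $\beta_1\sigma_6\cdots\sigma_{2n-2}$ and $\beta_2\sigma_6\cdots\sigma_{2n-2}$ lie in different double cosets for every $n\ge 4$: the trefoil component carries $n-1$ maxima in the first and $2$ in the second. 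You must therefore restrict to knots, or allow stabilization at any cap.

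Separately, your handling of crossings of critical values does not work as written. A level-preserving isotopy cannot reorder critical values. Even after all births are moved to the front, the remaining isotopy may push a maximum below a minimum and leave bridge position, so ``related by a level-preserving isotopy'' does not follow. You need an actual argument there. One option: drag a thin monotone finger from each low maximum up above all minima, carry it along by level-preserving ambient isotopies between Cerf events, and retract it at the end. This step needs proof just as much as the reordering of births and deaths.
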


Unlike braids, Birman knots require that identify secant homotopy classes near critical points, i.e.,  $AC \overset{loc}{\sim} XC \overset{loc}{\sim} BC$, where $X$ is a critical point connecting strand $A$ and $B$, while $C$ lies other strand. Thus, for Birman knot, we have %%%%%%%%%%%%%%%%%%%%%% a little modified
$$(i,k)_{s}\sim (i,l)_{s}\sim (j,l)_{s}\sim (j,k)_{s}$$
where $s=-0.25+\epsilon, 1.25-\epsilon$, for a sufficiently small $\epsilon>0$, when two strands $i_t, j_t$ meet at a critical point and so do $k_t,l_t$. The secants homotopic to critical points, also need special treatment. 

Let $\mathcal{E}_ K \subset \mathcal{S}_k$ be the set of secants homotopic to critical points. A secant $(k,l)$ in  $\mathcal{E}_ K$ is represented by $\overrightarrow{e_{j\pm}}$ if it is homotopic to the critical point with number $j$ at top ($+$) or at bottom ($-$) respectively, such that the orientation of $(k,l)$ is from left to right near the critical point. For a secant $(k,l)$ in  $\mathcal{E}_ K$ represented by $\overrightarrow{e_{j\pm}}$, the secant $(l,k)$ will be represented by $\overleftarrow{e_{j\pm}}$.

\begin{definition}\label{def9}
A \emph{secant-quandle} $SQ(K)$ of knot $K=\hat{\beta}_{2n}$ is presented as
$$SQ(K):=\Gamma \langle \mathcal{S} _K\mid  \mathcal{T}_K,\mathcal{I}_K \rangle,$$
where the relations $\mathcal{T}_K$ means the actions induced by any trisecant $(j,k,l)$ as in Definition \ref{def3}; $\mathcal{I}_K$ means two types relations:
\begin{itemize}
    \item for any secant $a\in \mathcal{S}_K$ and $\overrightarrow{e_{j\pm}},\overleftarrow{e_{j\pm}}\in \mathcal{E}_K$, we have
\begin{equation}\label{eq:6}
a\circ \overleftrightarrow{e_{j\pm}} =a.
\end{equation}
\item For a sufficiently small $\epsilon>0$, we have the following identification:
$$(i,k)_{s}\sim (i,l)_{s}\sim (j,l)_{s}\sim (j,k)_{s}$$
where $s=-0.25+\epsilon, 1.25-\epsilon$, for a sufficiently small $\epsilon>0$. when two strands $i_t, j_t$ meet at a critical point and so do $k_t,l_t$.  
\end{itemize}
\end{definition}

\begin{lemma}\label{lemma1}
Let $(j,k,l)$ be a trisecant of  a knot. If $(k,l)$ is local secant homotopic to critical point, then we have $(j,l){_+}=(j,l){_-}$ and $(l,j){_+}=(l,j){_-}$.  
\end{lemma}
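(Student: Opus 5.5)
The idea is to combine the trisecant relations of Definition \ref{def3} with the critical-point relation \eqref{eq:6} of Definition \ref{def9}. Let $(j,k,l)=(j,k,l)_{t_0}$ be the trisecant. Replacing $(j,k,l)$ by $(l,k,j)$ if necessary (as noted after Definition \ref{def3}), we may first treat $\mathrm{sign}(j,k,l)>0$ and then indicate the changes for the negative sign. Relations \eqref{eq:1} and \eqref{eq:2} give
$$(j,l)_+=(j,l)_-\circ (j,k),\qquad (l,j)_+=(l,j)_-\mathbin{/}(l,k).$$
So it suffices to show that the secant acting in each relation acts trivially, i.e. that $a\circ(j,k)=a$ and $a\mathbin{/}(l,k)=a$ for all $a$.

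First step: identify the acting secants with elements of $\mathcal{E}_K$. The hypothesis says $(k,l)$ is local secant homotopic to a critical point, so its class is $\overrightarrow{e_{m\pm}}$ or $\overleftarrow{e_{m\pm}}$ for some critical point $m$. The reversed secant $(l,k)$ is then the other orientation of the same $e_{m\pm}$, by the convention preceding Definition \ref{def9}. Hence $(l,k)\in\mathcal{E}_K$ as well. For the $/$ relation, relation \eqref{eq:6} says $a\circ e = a$ for $e\in\mathcal{E}_K$, and $/$ is the inverse operation of $\circ$ (the quandle axiom that $\cdot\circ e$ is a bijection with inverse $\cdot\mathbin{/}e$). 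Therefore $a\mathbin{/}e=a$ too. This gives $(l,j)_+=(l,j)_-$ immediately.

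Second step, and the main obstacle: the $(j,l)$ relation involves $(j,k)$, not $(k,l)$. The hypothesis literally concerns $(k,l)$, so I would argue geometrically that in the presence of the trisecant $(j,k,l)$ the roles can be matched. Near a critical point the strands $k$ and $l$ merge, so the secant $(k,l)$ becomes very short. Along the homotopy in $R_{kl}$ toward the critical point $X$, the identification $AC\overset{loc}{\sim}XC\overset{loc}{\sim}BC$ from the discussion preceding Definition \ref{def9} suggests comparing $(j,k)$ and $(j,l)$ through $(j,X)$.

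This comparison shows that the relation is vacuous on the level of classes. Transport $(j,l)_\pm$ and $(j,k)$ toward the critical point via the homotopy of $(k,l)$. The trisecant relation then degenerates to one in which the acting element is identified with an element of $\mathcal{E}_K$ or its effect is absorbed. Concretely, I would try to show that $(j,l)_+$ and $(j,l)_-$ are connected by a chain of local homotopies passing near $X$ on the side of the trisecant, using the identification $(j,k)_s\sim(j,l)_s$ at the critical level.

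If that fails, the fallback is to read the lemma with the acting secant being the one assumed critical. That is, apply the symmetric version of the relations, choosing the labelling of the trisecant so that the acting secant in both \eqref{eq:1} and \eqref{eq:2} is $(k,l)$ or $(l,k)$; relation \eqref{eq:6} then gives both equalities at once. For the negative sign, $\circ$ and $/$ are swapped, and the same argument applies since both operations are trivial by $e\in\mathcal{E}_K$.
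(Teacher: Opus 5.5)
\textbf{The gap is in the $(j,l)$ half.} Your treatment of $(l,j)$ is correct and matches the paper. Since $(l,k)\in\mathcal{E}_K$, relation \eqref{eq:6}, together with $/$ being the inverse of $\circ$, collapses \eqref{eq:2} to $(l,j)_+=(l,j)_-$. The $(j,l)$ half has three problems.
\begin{itemize}
\item Your reduction to ``$a\circ(j,k)=a$ for all $a$'' targets something generally false. $(j,k)$ is an ordinary secant, not an element of $\mathcal{E}_K$. You only need its action on the single element $(j,l)_-$.
\item You name the right ingredient: comparing $(j,k)$ with $(j,l)$ through the critical point $X$ via $AC\overset{loc}{\sim}XC\overset{loc}{\sim}BC$. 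But you never draw the conclusion; the paragraph stops at ``I would try to show\dots''.
\item Your fallback is not available. In a trisecant $(j,k,l)$ the point $k_{t_0}$ is the geometric middle point, lying on the segment from $j_{t_0}$ to $l_{t_0}$. So in either labelling, $(j,k,l)$ or $(l,k,j)$, the acting secants are $(j,k)$ and $(l,k)$. No choice of labelling makes $(k,l)$ or $(l,k)$ act on $(j,l)$. Reading the lemma that way amounts to also assuming $(j,k)\in\mathcal{E}_K$, which is a different hypothesis.
\end{itemize}

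\textbf{The missing step is short.} Because $k_t$ and $l_t$ are joined at the critical point, the identification at the critical point gives $(j,k)_\pm=(j,l)_\pm$. So the acting secant in \eqref{eq:1} is $(j,l)_-$ itself, and $(j,l)_+=(j,l)_-\circ(j,l)_-=(j,l)_-$ by the quandle axiom $a\circ a=a$. This is exactly the paper's argument. For the negative sign, use $a/a=a$, which follows by applying $/a$ to $a\circ a=a$.

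Equivalently, $(j,l)_+=(j,k)_+\sim(j,k)_-=(j,l)_-$, using the relation $(j,k)_+\sim(j,k)_-$ from Definition~\ref{def3}. That chain is what your ``chain of local homotopies near $X$'' would have produced had you carried it out.
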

\begin{proof} If $k_t,l_t$ is connected with a critical point, then $(j,k)_\pm =(j,l)_\pm$. Since $a\circ a=a$ is an axiom of quandles, from the equation (\ref{eq:1}) we obtain the result $(j,l){_+}=(j,l){_-}$. Since $(k,l), (l,k) \in \mathcal{E}_K$, from the equation (\ref{eq:2}) the equality $(l,j){_+}=(l,j){_-}$ follows.
\end{proof}

\begin{lemma}\label{lemma2}
 Assume that two trisecants $(j^\alpha, k^\alpha, l^\alpha)_{\alpha =0,1} \subset \partial D \subset R_{jl}$, where $D \subset $ is a film-frame contained in the ruled surface $R_{jl}$ consisting of strands $j_{t}$ and $l_{t}$, and $k^{0}_{t_0},k^{1}_{t_1}$ are two consecutive intersection points of strands $k^{0}_{t},k^{1}_{t}$ with $R_{jl}$. If $k^{0}_{t}=k^{1}_{t}$ or the strands $k^{0}_{t},k^{1}_{t}$ are connected to a critical point and the secant $(k^{0}_{t},k^{1}_{t})$ is homotopic to a critical point (see Figure~\ref{fig:3}), then we have $${(j^0,l^0)} {_-}={(j^1,l^1)}{_+}~\text{and}~{(l^0,j^0)} {_-}={(l^1,j^1)}{_+}.$$
\end{lemma}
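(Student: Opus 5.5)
The plan is to follow the secant $(j,l)$ across the film-frame $D$. Crossing each of the two bounding trisecants gives one relation from Definition~\ref{def3}, and the two relations should cancel. Across $(j^0,k^0,l^0)$ at $t_0$ we get $(j,l)_{t_0+\epsilon}$ from $(j,l)_{t_0-\epsilon}$. Across $(j^1,k^1,l^1)$ at $t_1$ we get $(j,l)_{t_1+\epsilon}$ from $(j,l)_{t_1-\epsilon}$. Since $D$ is a film-frame with no other trisecant on $R_{jl}$ between $t_0$ and $t_1$, the secants $(j,l)_{t_0+\epsilon}$ and $(j,l)_{t_1-\epsilon}$ lie in $\overline D$. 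They are therefore secant homotopic by Definition~\ref{def2}, and we may call this class $x$. With the labelling of the statement, $(j^0,l^0)_-$ is the secant just outside $D$ at the first trisecant and $(j^1,l^1)_+$ is the one just outside $D$ at the second. The goal is to show that composing the two relations returns the original element.

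First step: show that the two trisecants have opposite signs, i.e.\ $\mathrm{sign}(j,k^0,l)_{t_0}=-\mathrm{sign}(j,k^1,l)_{t_1}$. In the case $k^0_t=k^1_t$, a single strand $k$ pierces $R_{jl}$ at two consecutive points. The sign is $\det((j,l),Oz,T_k\beta)$, and the tangent of $k$ crosses the surface in opposite directions at consecutive intersections if we traverse $D$ (orientation measured relative to the $t$-direction from outside to outside). In the critical-point case, strands $k^0$ and $k^1$ join at a critical point and carry opposite vertical orientation, so their tangents have opposite $z$-components. Here I would use the hypothesis that $(k^0,k^1)$ is homotopic to the critical point: it guarantees that no other piercing separates the two points, so the transverse components relative to $(j,l)$ also give opposite determinants. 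I expect this sign bookkeeping, especially in the critical-point case, to be the main obstacle. I would argue it by a local model near the critical point, where the strand forms a cap passing through $R_{jl}$, and compare it with the single-strand case by isotopy inside $D$.

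Second step: compute the two relations. Suppose the first trisecant has positive sign. Then \eqref{eq:1} gives $x=(j,l)_-\circ(j,k^0)$, and at the second trisecant the negative sign gives $(j,l)_+=x\,/\,(j,k^1)$. It remains to check that $(j,k^0)$ and $(j,k^1)$ are the same element. If $k^0=k^1$, the secant $(j,k)$ is unchanged between $t_0$ and $t_1$ because it crosses no trisecant on $R_{jk}$ there; I would argue this in the same way as the film-frame homotopy, using that the region swept is part of $D$. In the critical-point case, the identification $(i,k)\sim(i,l)$ near critical points from Definition~\ref{def9} gives $(j,k^0)=(j,k^1)$. Then $(j,l)_+=((j,l)_-\circ y)/y=(j,l)_-$ by the quandle axiom $(a\circ b)/b=a$. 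The negative-first case is symmetric.

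Third step: the reversed secant $(l,j)$ is handled the same way. Relation \eqref{eq:2}, with $/$ and $\circ$ swapped according to sign, together with $(l,k^0)=(l,k^1)$ and the axiom $(a/b)\circ b=a$, gives $(l^0,j^0)_-=(l^1,j^1)_+$. Alternatively, when $(k^0,k^1)\in\mathcal E_K$, one could invoke Lemma~\ref{lemma1}-style arguments using \eqref{eq:6}. I would keep that as a fallback if identifying $(j,k^0)$ with $(j,k^1)$ fails at a critical point.
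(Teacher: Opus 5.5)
Your proposal is correct and follows essentially the same route as the paper: the two bounding trisecants have opposite signs, so \eqref{eq:1} at the first trisecant and its $/$-version at the second compose to $((j,l)_-\circ y)/y=(j,l)_-$. This requires identifying $(j,k^0)$ with $(j,k^1)$, directly when $k^0=k^1$ and via the critical-point identification otherwise, and the reversed secant $(l,j)$ is handled analogously. The paper simply asserts both the sign alternation and the equality $(j^0,k^0)=(j^1,k^1)$, so your sketch of the sign step goes beyond what the paper gives. One slip: the region swept by $(j,k)$ between $t_0$ and $t_1$ lies on $R_{jk}$, not in $D\subset R_{jl}$, so that step needs a different justification (or the same bare assertion the paper uses).
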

\begin{proof}
We check whether the action is also compatible with this conclusion. In our convention, the following equalities hold:
\begin{equation}
j^0{_+}=j^1{_-},\quad l^0{_+}=l^1{_-}. \notag
\end{equation}
Note that two trisecants $\{(j^\alpha, k^\alpha, l^\alpha)\}_{\alpha=0,1}$ have different signs, say, $\mathrm{sign}(j^0, k^0, l^0)> 0$ and $\mathrm{sign}(j^1, k^1, l^1)<0$ (or vice versa). From trisecants $\{(j^\alpha, k^\alpha, l^\alpha)\}_{\alpha=0,1}$ we obtain
\begin{eqnarray}
 (j^0,l^0){_+}=(j^0,l^0){_-} \circ (j^0,k^0), \quad
 (l^0,j^0){_+}=(l^0,j^0){_-} / (l^0,k^0), \label{eq:4}\\
 (j^1,l^1){_+}=(j^1,l^1){_-}/ (j^1,k^1), \quad
 (l^1,j^1){_+}=(l^1,j^1){_-}\circ (l^0,k^0),\label{eq:5}
\end{eqnarray}
since $(j^0,l^0)_+=(j^1,l^1)_-$.

If $k^{0}_{t} = k^{1}_{t}$, then $k^0_+ = k^1_-$ and $(j^{0},k^{0}) = (j^{1},k^{1})$. We obtain that
\begin{eqnarray*}
    (j^1,l^1){_+}&=&(j^1,l^1){_-}/\ (j^1,k^1) \\
   &=& ((j^0,l^0){_-}\circ (j^0,k^0))/ (j^1,k^1) \\
   &=& (j^0,l^0){_-}
\end{eqnarray*} 
If the strands $k^{0}_{t},k^{1}_{t}$ are connected to a critical point and the secant $(k^{0}_{t},k^{1}_{t})$ is homotopic to a critical point, then $(j^{0},k^{0}) \sim (j^{1},k^{1})$ and we obtain $(j^1,l^1){_+} = (j^0,l^0){_-}$.

Analogously, we can show that ${(l^0,j^0)} {_-}={(l^1,j^1)}{_+}$.
\end{proof}
\begin{definition}
\label{def5} 
Let $K$ be a Birman knot. $(j,k,l)$ is a \emph{trivial trisecant} of a knot if it belongs to one of the following two types. 
\begin{itemize}
    \item \emph{Type 1.} $(j,k)$ or $(k,l)\in \mathcal{E}_K$. 
    \item \emph{Type 2.}   Two trisecants $(j^\alpha, k^\alpha, l^\alpha)\subset \partial D$, $\alpha=0,1$, satisfy the conditions of Lemma \ref{lemma2} see Figure \ref{fig:3}. 
   
\end{itemize}

\end{definition}
\begin{figure}
    \centering
    \includegraphics[width=1\linewidth]{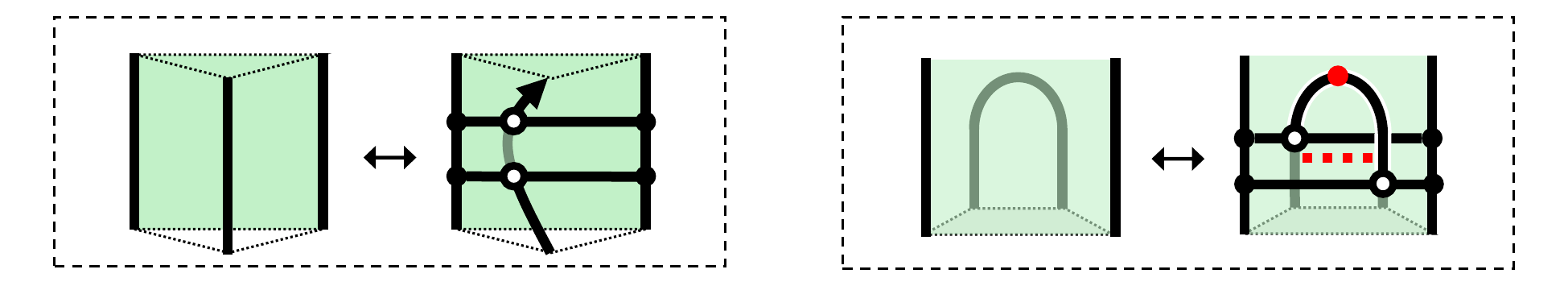}
    \caption{Type 2}
    \label{fig:3}
\end{figure}

\begin{remark}
    The set of all Type 1 trivial trisecants is denoted by $\mathcal{T}_1$. If a trisecant $(j,k,l)\in \mathcal{T}_1$, then the trisecant $(j,k,l)$ provides the relations $(j,l){_+}= (j,l){_-}$ and $(l,j){_+}= (l,j){_-}$. The set of all Type 2 trivial trisecants is denoted $\mathcal{T}_2$.
\end{remark}
%%%%%%%%%%%%%%%%%%%%%%%%%%%%%%%%%%%%%%%%%%%%%%%%%%% Section 3
\section{The proof of the main theorems}
\begin{lemma}
\label{lemma3}
%\rm{(Stability under trivial trisecant additions)}
Let $M$ and $M'$ be two braid (or knot) diagrams that coincide except the trisecant 
\begin{itemize}
    \item If $M=K$, $\mathcal{T}_{M'}=\mathcal{T}_{M}\cup \mathcal{T}'$, where $\mathcal{T}'=\{(j,k,l)\}$ is Type 1; 
    \item If $M=\beta$ or $K$, $\mathcal{T}'=\{(j^\alpha, k^\alpha, l^\alpha)\}_{\alpha=0,1}$ is Type 2.
\end{itemize}
Then the associated secant-quandles are isomorphic:
$$SQ(M)\cong SQ(M').$$
\end{lemma}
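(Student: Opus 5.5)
Since $SQ(M)$ and $SQ(M')$ are both given by generators and relations, the plan is to construct explicit mutually inverse quandle homomorphisms. I will compare generating sets first, then relations. Because $M$ and $M'$ coincide away from the extra trisecant(s) $\mathcal{T}'$, the ruled surfaces differ only in a small neighbourhood of the points where the strand $k$ (or $k^0,k^1$) pierces $R_{jl}$ (resp.\ $R_{j^\alpha l^\alpha}$). On every other ruled surface the film-frames, and hence the secant homotopy classes, are literally the same. So the generating sets $\mathcal{S}_M$ and $\mathcal{S}_{M'}$ differ only in secant classes of $R_{jl}$ (resp.\ $R_{j^\alpha l^\alpha}$) and $R_{lj}$. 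In $M'$ the extra trisecants cut an existing film-frame into pieces. This introduces new classes $(j,l)_\pm$, $(l,j)_\pm$ (in Type 2, also the middle class between the two trisecants) where $M$ had a single class $(j,l)$, $(l,j)$.

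\emph{Type 1 ($M=K$).} Define $\phi:SQ(M')\to SQ(M)$ by sending both $(j,l)_+$ and $(j,l)_-$ to the class $(j,l)$, and likewise for $(l,j)_\pm$; every other generator goes to itself. Define $\psi:SQ(M)\to SQ(M')$ by sending $(j,l)\mapsto (j,l)_-$ and $(l,j)\mapsto(l,j)_-$.

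To check that $\phi$ is well defined, we need the images of the two new relations (\ref{eq:1}) and (\ref{eq:2}) to hold in $SQ(M)$. If $(k,l)\in\mathcal{E}_K$, use the proof of Lemma \ref{lemma1}: the classes $(j,k)$ and $(j,l)$ are identified by the critical-point identification, so (\ref{eq:1}) becomes $a=a\circ a$. Relation (\ref{eq:2}) becomes $a=a/e$ with $e\in\mathcal{E}_K$, which follows from (\ref{eq:6}) (using $a/e=a$, equivalent to $a\circ e=a$ by invertibility of right multiplication). If instead $(j,k)\in\mathcal{E}_K$, the roles swap: (\ref{eq:6}) handles (\ref{eq:1}) directly, and idempotency handles (\ref{eq:2}).

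For $\psi$, all relations of $M$ are relations of $M'$ after the renaming. The one point to watch is that any other relation of $M$ involving the old class $(j,l)$ may, in $M'$, involve $(j,l)_+$ rather than $(j,l)_-$. Lemma \ref{lemma1} gives $(j,l)_+=(j,l)_-$ in $SQ(M')$, so these relations still hold. Then $\phi\psi=\mathrm{id}$ is immediate, and $\psi\phi=\mathrm{id}$ follows again from Lemma \ref{lemma1}.

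\emph{Type 2 ($M=\beta$ or $K$).} In $M'$ the two new trisecants split the old class $(j,l)$ into three classes: $(j^0,l^0)_-$, the middle class $(j^0,l^0)_+=(j^1,l^1)_-$, and $(j^1,l^1)_+$. The same happens for $(l,j)$. Define $\phi$ by collapsing all three classes to $(j,l)$, and $\psi$ by sending $(j,l)$ to the outer class $(j^0,l^0)_-$, which equals $(j^1,l^1)_+$ by Lemma \ref{lemma2}. The middle class is not free: it is determined as $(j^0,l^0)_-\circ(j^0,k^0)$.

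Well-definedness of $\phi$ means checking that the four relations (\ref{eq:4}) and (\ref{eq:5}) map to identities in $SQ(M)$. Under $\phi$ the middle class is also sent to $(j,l)$, so we need $(j,l)\circ(j^0,k^0)=(j,l)$ in $SQ(M)$. This is the main obstacle, since the relation does not hold abstractly.

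My plan is to avoid the problem by not collapsing the middle class. Instead, $\phi$ sends the middle class to $(j,l)\circ(j^0,k^0)$, the outer classes to $(j,l)$, and the middle $(l,j)$-class to $(l,j)/(l^0,k^0)$. With this choice, (\ref{eq:4}) holds by definition. Relation (\ref{eq:5}) reduces to $((j,l)\circ c)/c'=(j,l)$, where $c=(j^0,k^0)$ and $c'=(j^1,k^1)$. This holds because $c=c'$ in $SQ(M)$: either $k^0=k^1$ and the secants are homotopic across the region (in $M$ there is no trisecant separating them), or they are identified via the critical point as in Lemma \ref{lemma2}.

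We must also ensure that no other relation of $M'$ mentions the middle classes. This is true because the middle film-frame is a thin strip bounded only by the two new trisecants. With that, $\phi$ and $\psi$ are inverse, again using Lemma \ref{lemma2} for the outer classes, which gives $SQ(M)\cong SQ(M')$.
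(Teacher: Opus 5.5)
Your proposal is correct and follows the paper's own proof. For Type 1 you collapse $(j,l)_\pm$ and $(l,j)_\pm$ to single classes using the equalities of Lemma~\ref{lemma1}; for Type 2 you send the outer classes $(j^0,l^0)_-=(j^1,l^1)_+$ to $(j,l)$ and the middle class to $(j,l)\circ(j^0,k^0)$, which is exactly the paper's map $g$, and your explicit inverse $\psi$ together with the check $((j,l)\circ c)/c'=(j,l)$ (using $c=c'$ in $SQ(M)$) spells out the bijectivity that the paper only asserts.
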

\begin{proof}
We construct explicit isomorphisms. 

\textbf{Case 1:} Assume that $\mathcal{T}'=\{(j,k,l)\}$ is Type 1 trivial.
By Definition \ref{def5}, the secant $(j,k)$ or $(k,l)$ is trivial, hence $(j,l)_{+}=(j,l)_{-}$ and $(l,j)_{+}=(l,j)_{-}$. Denote the associated secant in $SQ(M)$ by $(j,l)$. Define a map $f: SQ(M') \to SQ(M)$ on generators by $f((j,l)_\pm ) = (j,l)$, $f((l,j)_\pm ) = (l,j)$ and $f(a)=a$ for all other $a\in\mathcal{S}_{M'}$. The Type 1 trisecant introduces no new independent generator, so $f$ is a bijective, hence an isomorphism. %%%comment:here we need to consider two secants (j,l), (l,j).

\textbf{Case 2:} Assume that $\mathcal{T}'=\{(j^\alpha, k^\alpha, l^\alpha)\}_{\alpha=0,1}$ is Type 2 trivial.
For such a pair, Lemma \ref{lemma2} gives ${(j^0,l^0)} {_-}={(j^1,l^1)}{_+}$ and ${(l^0,j^0)} {_-}={(l^1,j^1)}{_+}$ in $SQ(M')$. Let $(j,l) \in SQ(M)$ be the associated secant with $(j^0,l^0)_{-}$ and $(j^1,l^1)_{+}$. Define $g: SQ(M') \to SQ(M)$ on generators by $g((j^0,l^0)_{-}) = g((j^1,l^1)_{+}) = (j,l)$ and $g(a)=a$ for all remaining $a\in\mathcal{S}_{M'}$.
Then
$$g((j^0,l^0)_{+}) = g((j^0,l^0)_{-} \star (j^0,k^0)) = (j,l) \star (j,k),$$
$$g((j^1,l^1)_{-}) = g((j^0,l^0)_{+} \star (j^1,k^1)) = (j,l) \star (j,k),$$
where $\star = \circ,/$. Similarly, $g((l^0,j^0)_+)=g((l^1,j^1)_-)$. The map is bijective since the ``new'' generators $(j^0,l^0)_{+}$ and $(j^1,l^1)_{-}$ of $SQ(M')$ are uniquely presented by others. Hence $g$ is an isomorphism. 
\end{proof}

\begin{theorem}\label{thm1}\rm{(Main theorem for braids)}
The secant-quandle $SQ(\beta)$ is a braid invariant, i.e., if two braids $\beta$ and $\beta'$ are equivalent, then $SQ(\beta)\cong SQ(\beta')$. 
\end{theorem}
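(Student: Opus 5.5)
Since $SQ(\beta)$ is computed from a generic geometric representative, I would show that it is unchanged under any isotopy of braids in $\mathbb{C}\times[0,1]$ fixing the endpoints. Choose a generic isotopy $\beta^s$, $s\in[0,1]$, between generic representatives of $\beta$ and $\beta'$. By general position, $\beta^s$ is generic for all but finitely many $s$. Between these moments the horizontal trisecants move continuously, keep their signs, and never collide. The secant homotopy classes therefore deform continuously, giving a canonical bijection of generators and relations, so $SQ(\beta^s)$ is constant on such intervals. The work is to list the codimension-one events at the exceptional $s$ and check that each changes $SQ$ only by an isomorphism.

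I expect three kinds of events.

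\begin{enumerate}
\item A strand $k$ becomes tangent to a ruled surface $R_{jl}$. On one side of the event a pair of trisecants $(j,k,l)$ of opposite sign is born or dies, bounding a small film-frame. These two trisecants satisfy the hypothesis $k^0_t=k^1_t$ of Lemma~\ref{lemma2}, so the pair is of Type 2. Lemma~\ref{lemma3} (Case 2) then gives $SQ(\beta)\cong SQ(\beta')$ across the event.
\item A horizontal quadrisecant $(j,k,l,m)$ occurs, i.e.\ four strands are collinear at one level. This is the analogue of the third Reidemeister move, or of the tetrahedron relation in $G_n^k$. Near it there are four trisecants, $(j,k,l)$, $(j,k,m)$, $(j,l,m)$ and $(k,l,m)$, whose height order reverses across the event.
\item Two trisecants on different ruled surfaces swap heights, or a trisecant passes a level where secants are being compared. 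This should only reorder relations among disjoint generators and needs only a short remark.
\end{enumerate}

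For the quadrisecant move, I would write out the relations of the form (\ref{eq:1})--(\ref{eq:2}) before and after the move for the six secants $(j,k),(j,l),(j,m),(k,l),(k,m),(l,m)$ and their reverses. Take the \emph{incoming} secants (those just below the quadrisecant level) as free generators. The outgoing secants are then words in these generators on both sides of the move. Identifying the incoming secants on both sides, I must verify that the two expressions for each outgoing secant agree.

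\begin{itemize}
\item For $(j,m)$, one side gives $((j,m)\circ(j,k))\circ(j,l)'$, where $(j,l)'=(j,l)\circ(j,k)$. The other side is obtained in the opposite order.
\item The agreement should then follow from right self-distributivity,
$$(a\circ b)\circ c=(a\circ c)\circ(b\circ c),$$
together with invertibility of $\circ b$ via $/b$ for the reversed secants in (\ref{eq:2}).
\end{itemize}

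Since the secant-homotopy classes away from the event are unchanged, this gives an explicit bijection of generators respecting relations, hence an isomorphism of the presented quandles.

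The main obstacle is the quadrisecant step, and specifically the sign bookkeeping. Each of the four trisecants may have either sign, determined by $\det((j,l),Oz,T_k\beta)$, and the $\circ$ versus $/$ choice and the direction of each secant ($(j,l)$ versus $(l,j)$) must be tracked. I would first reduce to the case where all four strands are oriented upward and the configuration is positive. Reversing a strand's orientation flips the sign and exchanges $\circ$ with $/$. Reflecting the order $j,k,l,m$ turns the equations for $(j,\cdot)$ into the equations (\ref{eq:2}) for $(\cdot,j)$. After these reductions, one verification with self-distributivity, plus its mirror with $/$, should cover all cases. I would also check that the endpoint condition (points on a semicircle) rules out events at $t=0,1$.
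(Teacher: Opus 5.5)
Your overall route matches the paper's. A generic isotopy meets three kinds of event. Tangencies are $\Lambda_1$, absorbed by Lemma~\ref{lemma3} as a Type~2 pair. Height exchanges are $\Lambda_2$, and these are trivial: two trisecants at the same level sharing two points would already form a quadrisecant. Quadrisecants are $\Lambda_3$, checked by right self-distributivity.

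The gap is your reduction of $\Lambda_3$ to ``one verification, plus its mirror''. Orientation reversal does not affect the height order of the four trisecants at all. The relabelling $j\leftrightarrow m$, $k\leftrightarrow l$ identifies only some of the possible orders. To see the orders, normalize $j,m$ onto a fixed line and record the transverse offsets of $k,l$. The four trisecant loci become four lines through the origin of the offset plane. So on one side of the event the order is a cyclic rotation of $(j,l,m),(k,l,m),(j,k,l),(j,k,m)$, depending on the direction of approach, and on the other side it is reversed. Up to your symmetries this leaves three inequivalent configurations, essentially the paper's Cases 1--3.

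The signs cannot be normalized either. Reversing strand $k$ flips $(j,k,l)$ and $(j,k,m)$ together, so whether these two agree in sign is forced by the geometry. If $k$ starts inside the triangle spanned by $j,l,m$, it crosses the lines $jm$ and $jl$ in opposite directions. So these two trisecants necessarily have opposite signs, and no all-positive model of this configuration exists.

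This is exactly where the algebra depends on the geometry. In that configuration the order can be $(j,k,m),(j,l,m),(k,l,m),(j,k,l)$ on one side and the reverse on the other. Write $\star,\star',*$ for the operations of $(j,k,m),(j,k,l),(j,l,m)$. The two expressions for the outgoing $(j,m)$ are
\[
((j,m)\star(j,k))*(j,l)\quad\text{versus}\quad ((j,m)*((j,l)\star'(j,k)))\star(j,k)=((j,m)\star(j,k))*\bigl(((j,l)\star'(j,k))\star(j,k)\bigr).
\]
They agree precisely because $\star'$ is inverse to $\star$. With $\star=\star'=\circ$ they already differ in a conjugation quandle. In Case 1, by contrast, the check needs $(j,k,l)$ and $(j,k,m)$ to have the \emph{same} sign; this is the paper's remark that $(i,j,l)$ and $(i,j,k)$ share a sign.

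So for each of the three configurations you must determine the height order and the sign pattern forced by the geometry. Then check all six outgoing secants $(j,l),(l,j),(j,m),(m,j),(k,m),(m,k)$ in each. Every such identity follows from $x\mapsto x\circ b$ and $x\mapsto x/b$ being automorphisms, so no new algebra is needed. But this case analysis is the actual content of the $\Lambda_3$ step, and the paper itself writes out only Case 1.
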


\begin{proof}
There are three \emph{trisecant moves} of braids: $\Lambda_j (j=1,2,3)$ as illustrated in Figure \ref{fig:4}, \ref{fig:5}, \ref{fig:6} and \ref{fig:7}. From a standard codimension argument it follows that the set of moves $\{\Lambda_j\}_{j=1}^3$ is complete for the isotopy of braid in $\mathbb{C}\times \mathbb{I}$.

A one-parameter family of braids (an isotopy) encounters only finitely many non-generic events. The non-generic events can be divided into following three cases: 
\begin{itemize}
    \item \textbf{The tangency event}: two adjacent trisecants coincide, i.e., strand $k_t$ is tangent to $R_{jl}$; 
    \item \textbf{The 1-intersection event}: two trisecants intersect at most one point; 
    \item \textbf{The Collinear event}: more than 2 trisecants are collinear, e.g. quadrisecants. 
\end{itemize}
The first two events correspond to the moves $\Lambda_1$ and $\Lambda_2$. It remains to show that $\Lambda_3$ is sufficient to handle the collinear case. Any potential event involving the collinearity of five or more points resides in a stratum of codimension $\geq 3$ and is avoided by a generic isotopy path. This classification and completeness result is strongly supported by the comprehensive framework developed by Fiedler and Kurlin for links in a solid torus [\cite{Fiedler2}, Theorem 1.4]. 

%In their theory of trace graphs, the complete set of local moves generating all isotopies. Since we only consider braid moves rather than knot moves, {\color{red} the set ultimately filtered down from these 11 moves is $\Lambda_j (j=1,2,3)$.}

%%%comment: which 11 moves? Here I cannot understand what "filtered down" means neither.

%%reply: May I delete the last sentence? 

\begin{figure}%[H]  % [H] 强制图片位置不浮动
    \centering
    \includegraphics[width=1\linewidth]{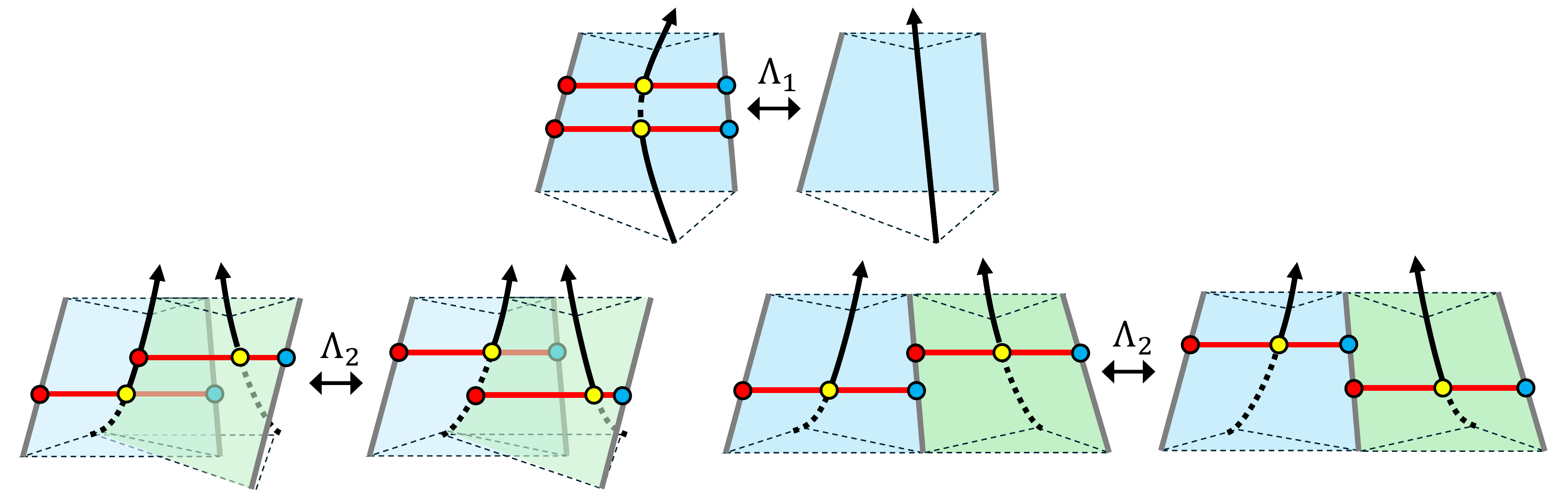}
    \caption{Trisecant isotopy moves.}
    \label{fig:4}
\end{figure}

\begin{figure}%[H]  % [H] 强制图片位置不浮动
    \centering
    \includegraphics[width=1\linewidth]{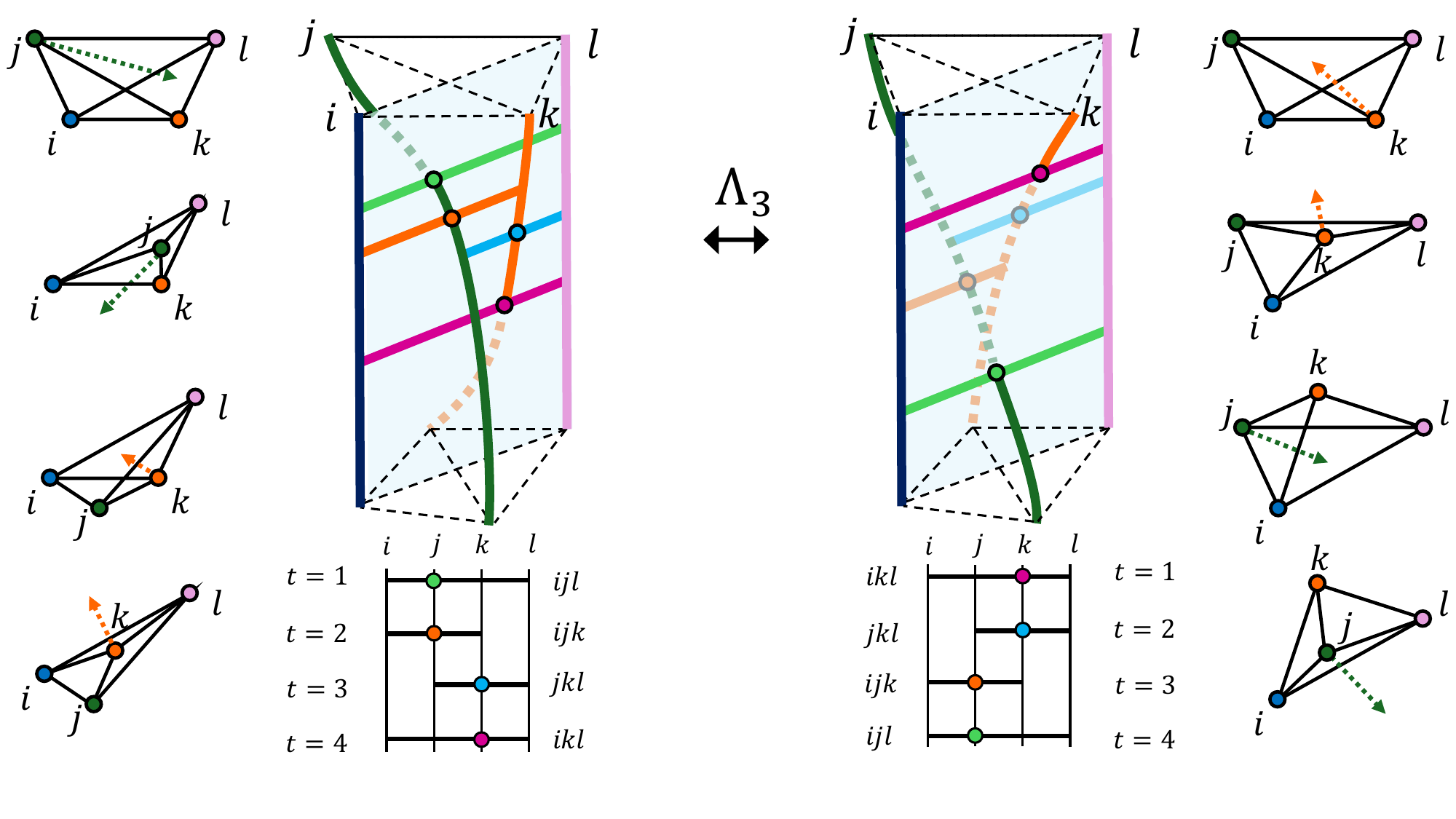}
    \caption{$\Lambda_3$: Case 1.}
    \label{fig:5}
\end{figure}

\begin{figure}%[H]  % [H] 强制图片位置不浮动
    \centering
    \includegraphics[width=1\linewidth]{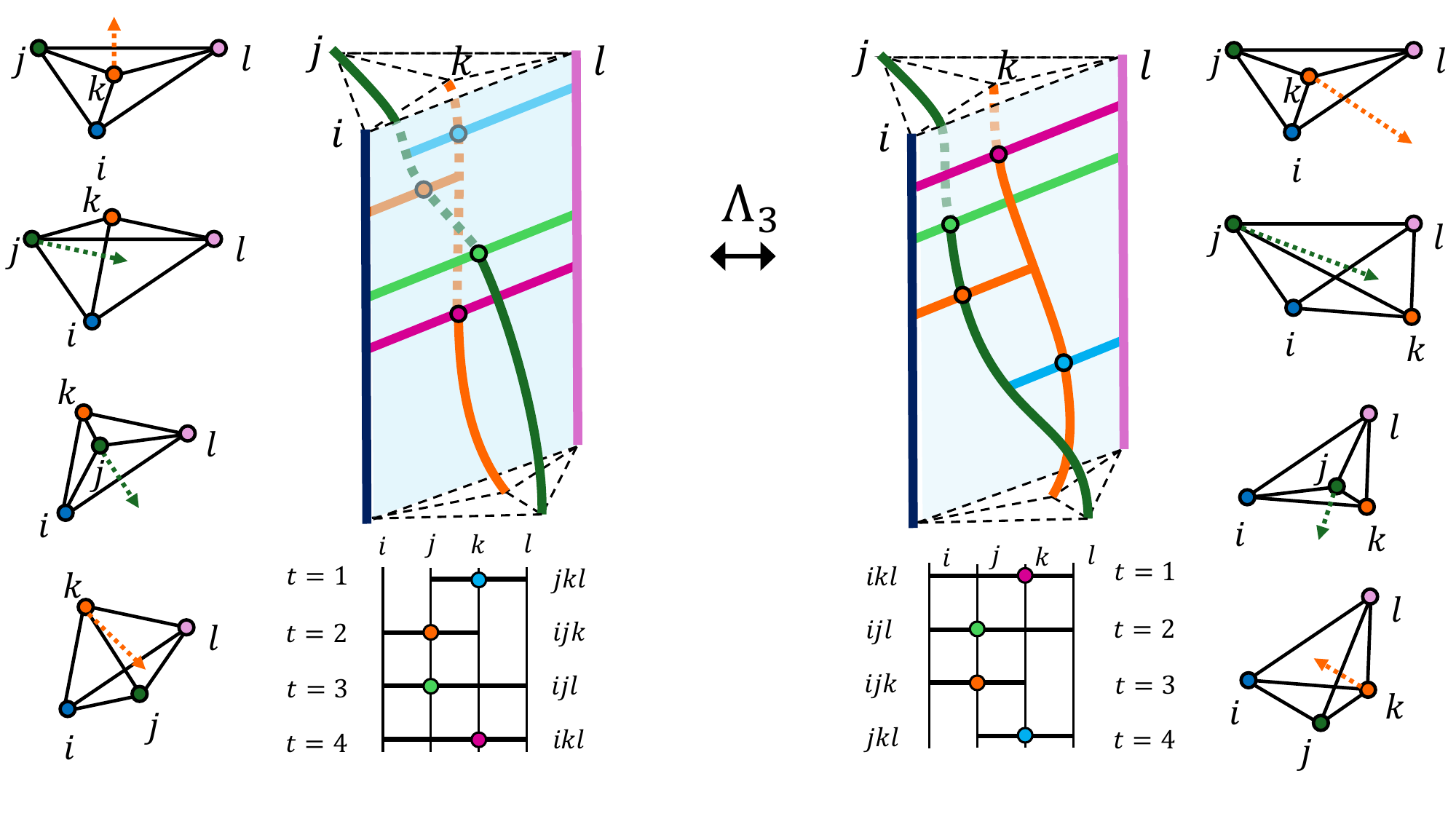}
    \caption{$\Lambda_3$: Case 2.}
    \label{fig:6}
\end{figure}

\begin{figure}%[H]  % [H] 强制图片位置不浮动
    \centering
    \includegraphics[width=1\linewidth]{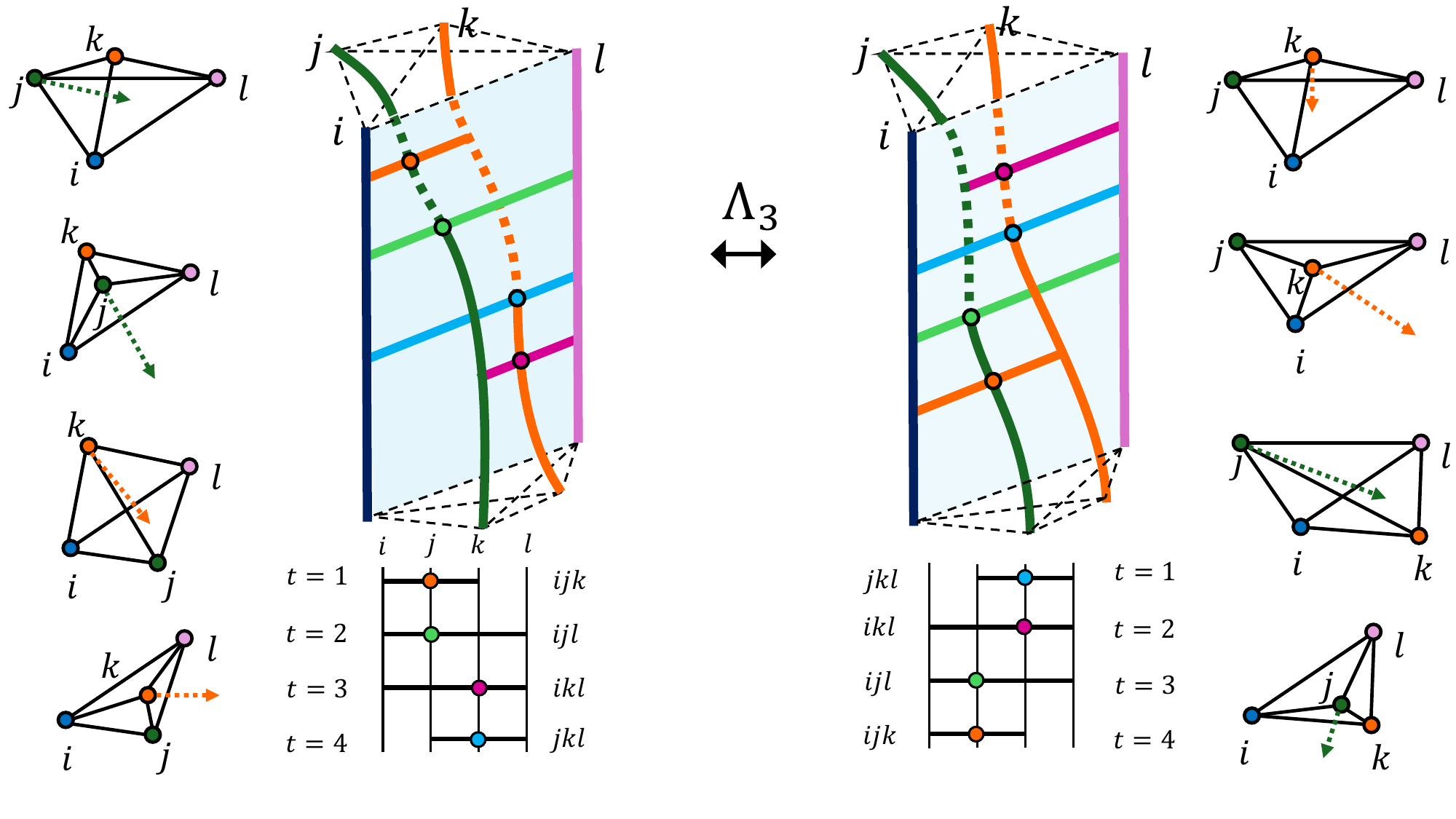}
    \caption{$\Lambda_3$: Case 3.}
    \label{fig:7}
\end{figure}

Now let us show that $SQ(\beta)$ is invariant under moves $\Lambda_j$. By Lemma \ref{lemma3}, $\Lambda_1$ does not influence on $SQ(\beta)$, since it makes Type 2 trisecants; $\Lambda_2$ induce the identity map of $SQ(\beta)$ to itself, since the generators $\mathcal{S}_\beta$ and relations $\mathcal{T}_\beta$ remain. Thus, we only need to check $\Lambda_3$, i.e., a ruled surface $R_{il}$ intersects two strands $j$ and $k$ locally involving those trisecants $(i,j,k), (i,j,l), (i,k,l),(j,k,l)$ and their opposite oriented trisecants. We just need to check three cases by symmetry: 

\textbf{Case 1.} $j_0$ and $k_0$ lie opposite sides of $R_{il}$, see Figure \ref{fig:5};

\textbf{Case 2.} $j_0$ and $k_0$ lie the same sides of $R_{il}$, and $k_0$ enclosed by the triangle $\triangle i_0j_0l_0$, see Figure \ref{fig:5};

\textbf{Case 3.} $j_0$ and $k_0$ lie the same sides of $R_{il}$, but $k_0$ is outside of the triangle $\triangle i_0j_0l_0$, see Figure \ref{fig:7}.

Let us check Case 1, and Case 2 and 3 are left to the readers. Obviously, secants $(i,j)$, $(j,k)$ and $(k,l)$ remain if $t\in [1,4]$, since $R_{ij}$, $R_{jk}$ and $R_{kl}$ does not intersect any other strands, whether using $\Lambda_3$ move. Note that trisecants $(i,j,l)$ and $(i,j,k)$ have the same sign and so do $(j,k,l)$ and $(i,k,l)$.We only prove the ``right secant'' $(x,y)$ case here; the ``left secant'' $(y,x)$ case is similar.  For the left of Figure \ref{fig:5}, according these four trisecants we have
$$\begin{array}{rrrl}
(i,j,l)_{0.5}:& (i,l)_1 &=&(i,l)_0\star (i,j)\\ 
(i,j,k)_{1.5}:&(i,k)_2 &=&(i,k)_1\star (i,j) \\
(j,k,l)_{2.5}:&(j,l)_3 &=&(j,l)_2* (j,k)\\
(i,k,l)_{3.5}:&(i,l)_4 &=&(i,l)_3* (i,k)_3
\end{array}$$ where $\star,*=\circ,/$. Since $(i,k){_+}:=(i,k)_0=(i,k)_1\neq (i,k)_2=(i,k)_3=(i,k)_4=:(i,k){_-}$, $(i,l){_+}:=(i,l)_0\neq (i,l)_1=(i,l)_2=(i,l)_3\neq (i,l)_4=:(i,l){_-}$ and $(j,l){_+}:=(j,l)_0=(j,l)_1=(j,l)_2\neq (j,l)_3= (j,l)_4:=(j,l){_-}$,
$$\begin{array}{lll}\label{eq:7}
 (i,l){_-}=(i,l)_4 &=& (i,l)_1*(i,k)_2 \\
    &=&((i,l)_0\star (i,j) ) * ((i,k)_1\star (i,j))\\
    &=&((i,l)_0\star (i,j) ) * ((i,k)_0\star (i,j))\\
    &=&((i,l)_0*(i,k)_0 ) \star (i,j)\\
    &=&((i,l){_+}*(i,k){_+} ) \star (i,j)\\
\end{array}$$
$$\begin{array}{lll}\label{eq:8}
 (j,l){_-}=(j,l)_4 &=& (j,l)_2*(j,k) \\
    &=& (j,l)_0*(j,k)\\
    &=& (j,l){_+}*(j,k).\\
\end{array}$$
For the right part of Figure \ref{fig:5}, according these four trisecants we have
$$\begin{array}{rrrr}
(i,k,l)'_{0.5}:&(i,l)'_1&=&(i,l)'_0* (i,k)'_0\\
(j,k,l)'_{1.5}:&(j,l)'_2&=&(j,l)'_1* (j,k)'\\
(i,j,k)'_{2.5}:&(i,k)'_3&=&(i,k)'_2\star (i,j)' \\
(i,j,l)'_{3.5}:&(i,l)'_4&=&(i,l)'_3\star (i,j)' 
\end{array}$$
Since $(i,k)'{_+}:=(i,k)'_0=(i,k)'_1=(i,k)'_2\neq (i,k)'_3=(i,k)'_4=:(i,k)'{_-}$,  $(i,l)'{_+}:=(i,l)_0\neq (i,l)'_1=(i,l)'_2=(i,l)'_3\neq (i,l)'_4=:(i,l)'{_-}$ and $(j,l)'{_+}:=(j,l)'_0=(j,l)'_1\neq (j,l)'_2=(j,l)'_3=(j,l)'_4=:(j,l)'{_-}$,
$$\begin{array}{lll}\label{eq:9}
 (i,l)'{_-}= (i,l)'_4 &=& (i,l)'_1\star (i,j)' \\
    &=&((i,l)'_0*(i,k)'_0)\star (i,j)'\\
    &=&((i,l)'{_+}*(i,k)'{_+})\star (i,j)'
\end{array}$$
$$\begin{array}{lll}\label{eq:10}
 (j,l)'{_-}=(j,l)'_4 &=& (j,l)'_1*(j,k)' \\
    &=& (j,l)'_0*(j,k)'\\
    &=& (j,l)'{_+}*(j,k)'
\end{array}$$
Consider the following morphism
$$\begin{array}{rrcl}
f:   &  (i,j)&\mapsto& (i,j)'\\
     & (j,k)&\mapsto& (j,k)'\\
     & (k,l)&\mapsto& (k,l)'\\
     & (i,k)_\pm &\mapsto& (i,k)'_\pm\\
     & (i,l)_\pm &\mapsto& (i,l)'_\pm\\
     & (j,l)_\pm &\mapsto& (j,l)'_\pm
\end{array}$$
It is clear that $f:SQ(\beta)\rightarrow SQ(\beta')$ where $\beta'=\Lambda_3(\beta)$ is an isomorphism.  Thus, $SQ(\beta)$ is an isotopy invariant of $\beta$. 
\end{proof}
%{\color{red} We will use notations $(i,j,k)$ for trisecant when $\text{sign}(i,j,k)>0$ is positive. }%%%comment We use this notation like this from the beginning.

%%reply: this is written by you, so now we are unify all notations, i think we can delete it right?
\begin{lemma}\label{lemma4}
    Let $\beta \in B_{2n}$ and $\beta'=\beta\sigma_{2n}\in B_{2n+2}$. Let $K$ and $K'$ be plat closures of $\beta$ and $\beta'$. Then $SQ(K)$ and $SQ(K')$ are isomorphic.
\end{lemma}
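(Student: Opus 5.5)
The plan is to compare the plat closures directly. Passing from $K=\hat\beta$ to $K'=\hat{\beta'}$ with $\beta'=\beta\sigma_{2n}$ adds two new strands $2n+1,2n+2$. These are joined by a new critical point at the top and a new critical point at the bottom. The crossing $\sigma_{2n}$ links strand $2n+2$, or equivalently the new cap, with the old cap joining strands $2n-1,2n$. Geometrically this is a stabilization: $K'$ is obtained from $K$ by a small finger move that creates a trivial kink near the critical point of the strands $2n-1,2n$. So I will not compute $SQ(K')$ from scratch. Instead I will list the new generators and relations in $\mathcal{S}_{K'}$, $\mathcal{T}_{K'}$, $\mathcal{I}_{K'}$ and show that each new item is either redundant or cancels against another.

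\textbf{Step 1: new generators.} I will isotope $K'$ so that the new strands $2n+1,2n+2$ stay in a thin neighbourhood of a vertical segment over the old critical point. Then every new secant has one of three forms. It may be a secant between the two new strands; this lies in $\mathcal{E}_{K'}$ because the new cap and cup make it homotopic to a critical point. It may join a new strand to an old strand $m$. Or it may be an old secant that has been cut by new trisecants. Using the identification relations at $s=-0.25+\epsilon$ and $1.25-\epsilon$ from Definition~\ref{def9}, a secant $(m,2n+1)$ or $(m,2n+2)$ near the top or bottom is identified with $(m,2n-1)$ or $(m,2n)$ through the critical-point identifications once the strands are merged. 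This gives a candidate map $\phi:SQ(K')\to SQ(K)$ that sends each new secant to the old secant it is homotopic to after the finger move is retracted.

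\textbf{Step 2: new trisecants.} I will sort the trisecants of $K'$ that do not come from $K$.

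\begin{itemize}
    \item Trisecants $(m,2n+1,2n+2)$ and their reorderings have the middle pair or an end pair in $\mathcal{E}_{K'}$. They are therefore Type 1, and Lemma~\ref{lemma1} together with Lemma~\ref{lemma3} makes them irrelevant.
    \item Trisecants $(m,p,q)$ in which a new strand pierces an old ruled surface $R_{mq}$ occur in pairs. The piercing strand goes down as $2n+1$ and comes back up as $2n+2$ through the new critical point, with $(2n+1,2n+2)$ homotopic to that critical point. This is exactly the hypothesis of Lemma~\ref{lemma2}, so such pairs are Type 2 and are removed by Lemma~\ref{lemma3}.
    \item The single crossing $\sigma_{2n}$ contributes trisecants of the form $(2n-1\text{ or }2n,\ \cdot,\ 2n+1\text{ or }2n+2)$ near one height. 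For these I will check directly that, after applying $\phi$ and using relation~(\ref{eq:6}) for secants in $\mathcal{E}$, each relation reduces to an instance of $a\circ a=a$ or to a tautology. This is the same argument as in Lemma~\ref{lemma1}.
\end{itemize}

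\textbf{Step 3: conclude.} Once every new relation is shown to be trivial under $\phi$, and every new generator is shown to be expressed through old ones, $\phi$ is well defined and bijective on generators modulo relations. The inclusion of old generators gives an inverse, exactly as in the proof of Lemma~\ref{lemma3}, so $SQ(K)\cong SQ(K')$.

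The main obstacle is the third case of Step 2: the trisecants created by $\sigma_{2n}$ together with the critical-point identifications. The crossing couples the new cap to the old cap, so secants such as $(m,2n)$ and $(m,2n+2)$ may be related by a genuine quandle operation rather than an equality. My first approach would be to choose the isotopy so that $\sigma_{2n}$ occurs immediately next to the bottom critical points, within the $\epsilon$-collar. The four-fold identification $(i,k)_s\sim(i,l)_s\sim(j,l)_s\sim(j,k)_s$ would then absorb these relations. If that fails, I would fall back on making the relevant secants lie in $\mathcal{E}_{K'}$, so that (\ref{eq:6}) trivializes the action.
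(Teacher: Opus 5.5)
There is a genuine gap, and it starts with your picture of $K'$. In $\beta'=\beta\sigma_{2n}$ the new strands $2n+1,2n+2$ share only the new top critical point. The crossing $\sigma_{2n}$ is between $2n+1$ and the old strand $\sigma^{-1}(2n)$ (the one at position $2n$ at $t=1$), not strand $2n+2$. At the bottom, $2n+1$ is joined to $\sigma^{-1}(2n-1)$ and $2n+2$ is joined to $\sigma^{-1}(2n)$. If $2n+1,2n+2$ were joined at both ends you would have $K\sqcup O$, as in Theorem~\ref{thm3}.

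What this hides is that the old bottom critical point joining $\sigma^{-1}(2n-1),\sigma^{-1}(2n)$ no longer exists in $K'$. So $SQ(K)$ has relations attached to it that are not relations of $SQ(K')$: the identifications of Definition~\ref{def9} at $s=1.25-\epsilon$, and relation~(\ref{eq:6}) for $e_{n-}$. Your Step 3, which uses the inclusion of old generators as an inverse ``as in Lemma~\ref{lemma3}'', is therefore unjustified. Lemma~\ref{lemma3} only compares diagrams differing by trivial trisecants, not diagrams with different critical points. You must re-derive the lost relations inside $SQ(K')$. This is what the paper's Part~3 does, obtaining e.g.\ $\overrightarrow{e_{n-}}=\overrightarrow{e_{(n+1)+}}=\overleftarrow{e_{(n+1)-}}$ and $(\sigma^{-1}(k),\sigma^{-1}(2n))_{t_{a+k}}=(\sigma^{-1}(k),2n+2)_{t_{a+2n}}$. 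Relatedly, Definition~\ref{def9} does not identify $(m,2n+1)$ with $(m,2n-1)$ near the top. For an old top pair $\{m,m'\}$ it only gives $(m,2n+1)\sim(m,2n+2)\sim(m',2n+2)\sim(m',2n+1)$.

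The second gap is in Step~2. You never treat the trisecants with an old strand in the middle and a new strand at an end: $(i,j,2n+1)$, $(i,j,2n+2)$, $(2n+1,j,i)$, $(2n+2,j,i)$. They cannot be avoided for nontrivial $\beta$, since they are the crossings of $\beta$ as seen from the new strand (cf.\ the proof of Theorem~\ref{thm3}). They are not Type~1. They are also not Type~2 in the sense of Lemma~\ref{lemma2}, because they lie on two different surfaces $R_{i,2n+1}$ and $R_{i,2n+2}$. The paper's Part~1 handles them by induction: starting from the top identification, each consecutive pair preserves $(i,2n+1)=(i,2n+2)$ and $(2n+1,i)=(2n+2,i)$.

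This invariant is exactly what resolves the step you leave open. At $t=1$ it gives $(\sigma^{-1}(2n),2n+1)=(\sigma^{-1}(2n),2n+2)$. The latter is unchanged, by idempotency, down to the new bottom critical point, so both equal $\overrightarrow{e_{(n+1)-}}\in\mathcal{E}_{K'}$. Hence the trisecants $(\sigma^{-1}(2n),2n+1,\sigma^{-1}(k))$ coming from $\sigma_{2n}$ act trivially by~(\ref{eq:6}). Your fallback is the right idea, but without the Part~1 invariant you have no argument that this secant lies in $\mathcal{E}_{K'}$.

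Your first option does not work. The $\epsilon$-collar of Definition~\ref{def9} lies below the last trisecant by definition, so no placement of $\sigma_{2n}$ puts its trisecants inside it. Moreover, the four-fold identification only relates secants between two pairs of strands that each meet at a critical point. Finally, in your first bullet, a trisecant whose end pair $(j,l)$ lies in $\mathcal{E}$ is not Type~1; Definition~\ref{def5} requires $(j,k)$ or $(k,l)$ to lie in $\mathcal{E}_K$.
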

\begin{proof}
    Assume that $\beta$ is placed in $\mathbb{R}^{2}\times [0,1]$ and $\sigma_{2n}$ for $\beta'$ is placed in $\mathbb{R}^{2}\times[1,1.25]$. Note that \begin{small}$$\mathcal{S'} = \mathcal{S} \cup \{\text{secants}~(i,2n+1),(i,2n+2), (2n+1,i),(2n+2,i), (2n+1,2n+2), (2n+2,2n+1)\}.$$
    \end{small}
    Let $\mathcal{T}_{K}$ is the set of actions induced from trisecants of $K$. Then
    \begin{eqnarray*}
       \mathcal{T}_{K'} = \mathcal{T}_{K}&\cup& \{\text{trisecants in [0,1] containing points}~2n+1,2n+2\}\\ &\cup& \{\text{trisecants in}~ [1,1.25]\}. 
    \end{eqnarray*}
    We will show that relations in $\mathcal{T}_{K'} -\mathcal{T}_{K}$ do not reduce $\mathcal{S}$ and generators in $\mathcal{S}'-\mathcal{S}$ can be presented by $\mathcal{S}$.
    Let $$0=t_{0}<t_{1}<t_{2}<\dots<t_{a}<1<t_{a+1}<\dots<t_{b}$$ be the heights where trisecants appear for $\beta'$. Let us denote by $(i,j)_{t_{s}}$ be a (homotopy class of) secant from $i$ to $j$ in $[t_{s},t_{s+1}]$. Notice that if a trisecant $(i,j,k)$ with a positive sign appears at $t_{s}$, then 
\begin{eqnarray*}
        (i,k)_{t_{s+1}} &=& (i,k)_{t_{s}} \circ (i,j)_{t_{s}},\\
        (k,i)_{t_{s+1}} &=& (k,i)_{t_{s}} / (k,j)_{t_{s}},\\
        (l,m)_{t_{s}}&=&(l,m)_{t_{s+1}},~\text{otherwise.}
    \end{eqnarray*}
    In this proof, we assume that a trisecant $(i,j,k)$ has a positive sign. If $(i,j,k)$ has a negative sign, then we will denote it by $(k,j,i)$ instead.
    
    The proof consists of three parts: 
    
    \textbf{Part 1.} We consider secants and trisecants in $[0,1]$. Note that trisecants in $[0,1]$ containing points $2n+1,2n+2$ have one of the forms of $(i,j,2n+1),(i,j,2n+2)$ for $1\leq i,j\leq 2n$. 
    It follows that $(2n+1,2n+2)_{t_{s}} = (2n+1,2n+2)_{t_{0}}=\overrightarrow{e_{(n+1)+}}$ and $(2n+2,2n+1)_{t_{s}} = (2n+2,2n+1)_{t_{0}}=\overleftarrow{e_{(n+1)+}}$ for $0\leq t_s \leq 1$. 
    
Without loss of generality, we assume that if $(i,j,2n+1)$ (or $(2n+1,j,i)$) appears at $t_{s}$, then $(i,j,2n+2)$ ($(2n+2,j,i)$ resp.) appears at $t_{s\pm 1}$.
    
    {\bf Claim.} If a pair of trisecants $(i,j,2n+1),(i,j,2n+2)$ or $(2n+1,j,i),(2n+2,j,i)$ appear at $t_{s}$ and $t_{s+1}$, then $(k,l)_{t_{s-1}} = (k,l)_{t_{s+1}},$ for $k,l \neq 2n+1,2n+2$ and $(k,2n+1)_{t_{s-1}}=(k,2n+2)_{t_{s+1}}$. That is, trisecants $(i,j,2n+1),(i,j,2n+2)$ do not affect on secants $(k,l)$ for $k,l \neq 2n+1,2n+2$.

    Since the points $2n+1$ and $2n+2$ meets at the extreme point, $(i,2n+1)_{t_{0}} = (i,2n+2)_{t_{0}}$.
    Let $(i,j,2n+1),(i,j,2n+2)$ be the first pair of trisecants containing $2n+1$ and $2n+2$ which appear at $t_{s}$ and $t_{s+1}$. We obtain equalities 
    \begin{eqnarray*}
        (i,2n+1)_{t_{s}} &=& (i,2n+1)_{t_{s-1}}\circ (i,j)_{t_{s-1}}\\
        (2n+1,i)_{t_{s}} &=& (2n+1,i)_{t_{s-1}}\circ (2n+1,j)_{t_{s-1}}\\(i,2n+2)_{t_{s+1}} &=& (i,2n+2)_{t_{s}}\circ (i,j)_{t_{s}}\\
        (2n+2,i)_{t_{s+1}} &=& (2n+2,i)_{t_{s}}\circ (2n+2,j)_{t_{s}}
    \end{eqnarray*}
    It is clear that $(i,j)_{t_{s-1}} = (i,j)_{t_{s}}=(i,j)_{t_{s+1}}$ and $(i,2n+2)_{t_{s}} = (i,2n+2)_{t_{s-1}}$, $(2n+2,i)_{t_{s}} = (2n+2,i)_{t_{s-1}}$,$ (i,2n+1)_{t_{s+1}} = (i,2n+1)_{t_{s}}$, $(2n+1,i)_{t_{s+1}} = (2n+1,i)_{t_{s}}$.
    By assumption, $$(i,2n+1)_{t_{s-1}} = (i,2n+1)_{t_{0}}= (i,2n+2)_{t_{0}}= (i,2n+2)_{t_{s-1}},$$
    and
    $$(2n+1,i)_{t_{s-1}} =(2n+1,i)_{t_{0}} = (2n+2,i)_{t_{0}} =(2n+2,i)_{t_{s-1}}.$$
    Therefore we obtain that 
    \begin{eqnarray*}
        (i,2n+1)_{t_{s}} &=& (i,2n+1)_{t_{s-1}}\circ (i,j)_{t_{s-1}}\\
        &=& (i,2n+2)_{t_{s-1}}\circ (i,j)_{t_{s-1}} \\
        &=& (i,2n+2)_{t_{s}}\circ (i,j)_{t_{s}} \\
        &=&(i,2n+2)_{t_{s+1}},
    \end{eqnarray*}
    and it follows that
    $$(i,2n+1)_{t_{s+1}} = (i,2n+1)_{t_{s}} = (i,2n+2)_{t_{s+1}}.$$
    Analogously,
     \begin{eqnarray*}
        (2n+1,i)_{t_{s}} &=& (2n+1,i)_{t_{s-1}}\circ (2n+1,j)_{t_{s-1}}\\
        &=& (2n+2,i)_{t_{s-1}}\circ (2n+2,j)_{t_{s-1}}\\
        &=& (2n+2,i)_{t_{s}}\circ (2n+2,j)_{t_{s}}\\
        &=&(2n+2,i)_{t_{s+1}},
    \end{eqnarray*}
    and it follows that 
    $$(2n+1,i)_{t_{s+1}}= (2n+1,i)_{t_{s}}=(2n+2,i)_{t_{s+1}}.$$
    Inductively, we can prove our claim for all pairs of trisecants containing $2n+1, 2n+2$ appearing consecutively.

    \textbf{Part 2.} Let us consider secants and trisecants in $[1,1.25]$. At the time $t=1$, all points are placed in the initial places. At this time, points are numbered by $\sigma^{-1}(1),\dots, \sigma^{-1}(2n), 2n+1,2n+2$ from the left to right, where $\sigma$ is the image of $\beta$ by the natural map from $B_{2n}$ to the permutation group $\Sigma_{2n}$. From Part 1 we obtained that $$(\sigma^{-1}(s),2n+1)_{t_{a}}= (\sigma^{-1}(s),2n+2)_{t_{a}}$$
    and
    $$(2n+1,\sigma^{-1}(s))_{t_{a}}= (2n+2,\sigma^{-1}(s))_{t_{a}}.$$
    In $[1,1.25]$ we can describe all trisecants as below:
    \begin{align*}
      &(\sigma^{-1}(2n),2n+1,2n+2),(\sigma^{-1}(2n),2n+1,\sigma^{-1}(1)),(\sigma^{-1}(2n),2n+1,\sigma^{-1}(2)), \dots \\
      &\dots,(\sigma^{-1}(2n),2n+1,\sigma^{-1}(2n-2)),  
    \end{align*}
    and they appear at $t_{a+1},\dots, t_{b} = t_{a+2n}.$
    We will show that $$(\sigma^{-1}(l),\sigma^{-1}(m))_{t_{a+s}} = (\sigma^{-1}(l),\sigma^{-1}(m))_{t_{a+s+1}},$$
    where $l,m\neq 2n+1,2n+2$, and
$$(\sigma^{-1}(l),2n+1)_{t_{a+s}} = (\sigma^{-1}(l),2n+2)_{t_{a+s}}.$$
    First from the tresecant $(\sigma^{-1}(2n),2n+1,2n+2)$ at $t_{a+1}$ we obtain actions
    \begin{eqnarray*}
        (\sigma^{-1}(2n),2n+2)_{t_{a+1}} &=& (\sigma^{-1}(2n),2n+2)_{t_{a}} \circ (\sigma^{-1}(2n),2n+1)_{t_{a}}\\
        (2n+2,\sigma^{-1}(2n),)_{t_{a+1}} &=& (2n+2,\sigma^{-1}(2n))_{t_{a}} \circ (2n+2,2n+1)_{t_{a}},
    \end{eqnarray*}
    and $(i,j)_{t_{a}} =    (i,j)_{t_{a+1}}$, otherwise.
    Since $(\sigma^{-1}(2n),2n+1)_{t_{a}}= (\sigma^{-1}(2n),2n+2)_{t_{a}}$, we obtain $$(\sigma^{-1}(2n),2n+2)_{t_{a+1}} = (\sigma^{-1}(2n),2n+2)_{t_{a}}=(\sigma^{-1}(2n),2n+1)_{t_{a}}=(\sigma^{-1}(2n),2n+1)_{t_{a+1}}.$$
    Since $(2n+2,2n+1)_{t_{a}} =\overleftarrow{e_{(n+1)+}} $, we obtain that $$(2n+2,\sigma^{-1}(2n))_{t_{a+1}} = (2n+2,\sigma^{-1}(2n))_{t_{a}} =(2n+1,\sigma^{-1}(2n))_{t_{a}} =(2n+1,\sigma^{-1}(2n))_{t_{a+1}}.$$
    
    For each $t=t_{a+1+k}$, the trisecant $(\sigma^{-1}(2n),2n+1,\sigma^{-1}(k))$ appears and it gives equalities:
    \begin{eqnarray*}
        (\sigma^{-1}(2n),\sigma^{-1}(k))_{t_{a+1+k}} &=& (\sigma^{-1}(2n),\sigma^{-1}(k))_{t_{a+k}} \circ (\sigma^{-1}(2n),2n+1)_{t_{a+k}}\\
        (\sigma^{-1}(k),\sigma^{-1}(2n))_{t_{a+1+k}} &=& (\sigma^{-1}(k),\sigma^{-1}(2n))_{t_{a+k}} \circ (\sigma^{-1}(k),2n+1)_{t_{a+k}},
    \end{eqnarray*}
    and $(i,j)_{t_{a+k}} =    (i,j)_{t_{a+1+k}}$ otherwise.

    Since $\sigma^{-1}(2n)$ and $2n+2$ meet at an extreme point in $t>t_{a+2n}$ $$(\sigma^{-1}(2n),2n+2)_{t_{a+s}}= (\sigma^{-1}(2n),2n+2)_{t_{a+2n}}= \overrightarrow{e_{(n+1)-}} $$
    and $$(2n+2,\sigma^{-1}(2n))_{t_{a+s}}= (2n+2,\sigma^{-1}(2n))_{t_{a+2n}}= \overleftarrow{e_{(n+1)-}} ,$$
    for $s = 1,\dots, 2n-1$. It follows that $(\sigma^{-1}(2n),\sigma^{-1}(k))_{t_{a+1+k}} = (\sigma^{-1}(2n),\sigma^{-1}(k))_{t_{a+k}}$ for each $k$. 
    Note that $$(\sigma^{-1}(k),2n+2)_{t_{a+1+s}}=(\sigma^{-1}(k),2n+2)_{t_{a}}=(\sigma^{-1}(k),2n+1)_{t_{a}} = (\sigma^{-1}(k),2n+1)_{t_{a+1+s}},$$
    for $s=1,\dots,2n-1$. Moreover, 
    \begin{eqnarray*}
        (\sigma^{-1}(k),\sigma^{-1}(2n))_{t_{a+1+k}} = (\sigma^{-1}(k),\sigma^{-1}(2n))_{t_{a+1+k+1}} =\dots&=& (\sigma^{-1}(k),\sigma^{-1}(2n))_{t_{a+2n}} \\&=&(\sigma^{-1}(k),2n+2)_{t_{a+2n}}. 
    \end{eqnarray*}
    From the above observations, we obtain that 
    \begin{eqnarray*}
        (\sigma^{-1}(k),2n+2)_{t_{a+2n}} &=&(\sigma^{-1}(k),\sigma^{-1}(2n))_{t_{a+1+k}} \\&=& (\sigma^{-1}(k),\sigma^{-1}(2n))_{t_{a+k}} \circ (\sigma^{-1}(k),2n+1)_{t_{a+k}}\\
        &=& (\sigma^{-1}(k),\sigma^{-1}(2n))_{t_{a+k}} \circ (\sigma^{-1}(k),2n+2)_{t_{a+2n}},
    \end{eqnarray*}
    or equivalently,
    \begin{eqnarray*}
        (\sigma^{-1}(k),\sigma^{-1}(2n))_{t_{a+k}} &=& (\sigma^{-1}(k),2n+2)_{t_{a+2n}}/(\sigma^{-1}(k),2n+2)_{t_{a+2n}} \\
        &=& (\sigma^{-1}(k),2n+2)_{t_{a+2n}} = (\sigma^{-1}(k),\sigma^{-1}(2n))_{t_{a+1+k}}
    \end{eqnarray*}

    \textbf{Part 3.}
    In Parts 1 and 2, we have shown that 
    \begin{itemize}
        \item $(k,l)$ remains under actions from trisecants containing $2n+1$ or $2n+2$ for $1\leq k,l\leq2n$.
        \item $(i,2n+1)_{t_{s}} = (i,2n+2)_{t_{s}}$, for $s=0,\dots, a$.
        \item $(\sigma^{-1}(k),2n+1)_{t_{a+s}} = (\sigma^{-1}(k),2n+1)_{t_{a+s+1}}$ for $s=0,\dots 2n-1$.
        \item $(\sigma^{-1}(k),2n+2)_{t_{a+s}} = (\sigma^{-1}(k),2n+2)_{t_{a+s+1}}$ for $s=0,\dots 2n-1$.
        \item
        $(2n+1,\sigma^{-1}(k))_{t_{a+s}} = (2n+1,\sigma^{-1}(k))_{t_{a+s+1}}$ for $s=0,\dots 2n-1$.
         \item
        $(2n+2,\sigma^{-1}(k))_{t_{a+s}} = (2n+2,\sigma^{-1}(k))_{t_{a+s+1}}$ for $s=0,\dots 2n-1$.
        \item $(2n+1,2n+2)_{t_{s}} = \overrightarrow{e_{(n+1)+}} , (2n+2,2n+1)_{t_{s}} =\overleftarrow{e_{(n+1)+}} ,
        (\sigma^{-1}(2n),2n+1)_{t_{a+k}} =(\sigma^{-1}(2n),2n+2)_{t_{a+k}} = \overrightarrow{e_{(n+1)-}} ,  (2n+1, \sigma^{-1}(2n))_{t_{a+k}} = (2n+2, \sigma^{-1}(2n))_{t_{a+k}} = \overleftarrow{e_{(n+1)-}} $
    \end{itemize}
    and they are all possible equalities coming from $\mathcal{T}_{K'}-\mathcal{T}_{K}$.

    To finish the proof, we need to show that all secants $(s,2n+1)$, $(s,2n+2)$, $(2n+1,s)$ and $(2n+2,s)$ can be presented by secants $(k,l)$ for $k,l\neq 2n+1,2n+2$ for any time $t$. It is sufficient to show that $(\sigma^{-1}(k),2n+1)_{t_{a+s}}$, $(\sigma^{-1}(k),2n+2)_{t_{a+s}}$, $(2n+1,\sigma^{-1}(k))_{t_{a+s}}$, $(2n+2,\sigma^{-1}(k))_{t_{a+s}}$ can be presented by $(k,l)$ for some $s=0, \dots, 2n$.\footnote{Notice that if $(\sigma^{-1}(k),2n+2)_{t_{a+s}}$ can be presented by $(k,l)$ for some $s$, then so does $(\sigma^{-1}(k),2n+2)_{t_{a+s}}$ for all $s =0,\dots, 2n$. For $s =-1, \dots, -a$, since $(\sigma^{-1}(k),2n+2)_{t_{a+s}}= (\sigma^{-1}(k),2n+2)_{t_{a+s+1}}*(\sigma^{-1}(k),i_{j})$ for some $i_{j}$, where $*=\circ,/$, $(\sigma^{-1}(k),2n+2)_{t_{a+s}}$ can be presented by $(k,l)$ for all  $s =-1, \dots, -a$.}
    We already showed that $(\sigma^{-1}(k),\sigma^{-1}(2n))_{t_{a+k}} = (\sigma^{-1}(k),2n+2)_{t_{a+k}}$. It is clear that $$(2n+2,\sigma^{-1}(k))_{t_{a+k}} = (\sigma^{-1}(2n),\sigma^{-1}(k))_{t_{a+k}}.$$
    Finally, since $\sigma^{-1}(2n-1)$ and $2n+1$ meet at an extreme point, thus
    $$(2n+1,\sigma^{-1}(k))_{t_{a+2n}} = (\sigma^{-1}(2n-1),\sigma^{-1}(k))_{t_{a+2n}},$$ 
    $$(\sigma^{-1}(k),2n+1)_{t_{a+2n}} = (\sigma^{-1}(k),\sigma^{-1}(2n-1))_{t_{a+2n}}.$$
    In particular, from the previous equalities, we obtain
    \begin{eqnarray*}
        (\sigma^{-1}(2n-1),\sigma^{-1}(2n))_{t_{a+2n}} &=& (\sigma^{-1}(2n-1),\sigma^{-1}(2n))_{t_{a}}\\
&=&(\sigma^{-1}(2n-1),2n+2)_{t_{a}}\\
        &=& (\sigma^{-1}(2n-1),2n+1)_{t_{a}} \\
        &=&(\sigma^{-1}(2n-1),2n+1)_{t_{a+2n}}=\overrightarrow{e_{n-}} \\
        (\sigma^{-1}(2n-1),\sigma^{-1}(2n))_{t_{a+2n}} &=& (2n+1,\sigma^{-1}(2n))_{t_{a+2n}}\\
        &=&(2n+1,\sigma^{-1}(2n))_{t_{a}}  \\
        &=& (2n+1,2n+2)_{t_{a}} =\overrightarrow{e_{(n+1)+}} \\
        \overrightarrow{e_{(n+1)+}}  &=& (2n+1,2n+2)_{0} = (2n+1,2n+2)_{t_{a}} \\
        &=&(2n+1,2n+2)_{t_{a+2n}}\\
        &=& (2n+1,\sigma^{-1}(2n))_{t_{a+2n}} \\&=&  (2n+2,\sigma^{-1}(2n))_{t_{a+2n}}=\overleftarrow{e_{(n+1)-}} 
    \end{eqnarray*}
    That is, we obtain 
    \begin{eqnarray*}
        \overrightarrow{e_{n-}}  &=& \overrightarrow{e_{(n+1)+}}  = \overleftarrow{e_{(n+1)-}} ,\\
        \overrightarrow{e_{n-}}  &=& (\sigma^{-1}(2n-1),\sigma^{-1}(2n)).
    \end{eqnarray*}
    Analogously, 
    \begin{eqnarray*}
        \overleftarrow{e_{n-}}  &=& \overleftarrow{e_{(n+1)+}}  = \overrightarrow{e_{(n+1)-}} ,\\
        \overleftarrow{e_{n-}}  &=& (\sigma^{-1}(2n),\sigma^{-1}(2n-1)).
    \end{eqnarray*}
    That is, secants in $\mathcal{S'}-\mathcal{S}$ can be presented by secants in $\mathcal{S}$ and it completes the proof.
\end{proof}

\begin{remark}
    The proof of Lemma~\ref{lemma4} is based on the fact that new trisecants containing $2n+1$ or $2n+2$ are Type 1 or Type 2 trivial. The main contribution is to show that new generators containing $2n+1$ or $2n+2$ are presented by $(i,j)$ for $1\leq i\neq j \leq 2n$ in $SQ(K)$.
\end{remark}
\begin{theorem}\label{thm2}\rm{(Main theorem for knots)}
The secant-quandle $SQ(K)$ is a knot invariant, where $K:=\hat{\beta}$, $\beta\in B_{2n}$, i.e., if two links $K$ and $K'$ are equivalent, then $SQ(K)\cong SQ(K')$. 
\end{theorem}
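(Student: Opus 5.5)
The plan is to reduce Theorem~\ref{thm2} to Birman's algebraic description of plat equivalence, Proposition~\ref{prop2}, and then check invariance of $SQ$ under each elementary operation that appears there. Suppose $K=\hat\beta_1$ and $K'=\hat\beta_2$ are equivalent links with $\beta_j\in B_{2n_j}$. Repeated use of Lemma~\ref{lemma4} gives
\[
SQ(\hat\beta_j)\cong SQ\bigl(\widehat{\beta_j\sigma_{2n_j}\sigma_{2n_j+2}\cdots\sigma_{2n}}\bigr),
\]
since each stabilisation $\beta\mapsto\beta\sigma_{2m}$ is exactly the situation of that lemma. So we may assume $\beta_1,\beta_2\in B_{2n}$ lie in the same double coset, that is, $\beta_2=h_1\beta_1h_2$ with $h_1,h_2\in K_{2n}$. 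Next, the secant quandle of the plat closure depends only on the isotopy class of the braid part in $\mathbb{R}^2\times[0,1]$, with the caps fixed. This follows from the proof of Theorem~\ref{thm1}: the moves $\Lambda_1,\Lambda_2,\Lambda_3$ are supported away from the critical points, so the identifications $\mathcal{I}_K$ and the elements of $\mathcal{E}_K$ are carried along unchanged by the isomorphisms constructed there. Lemma~\ref{lemma3} is used as before for Type 2 pairs. It therefore suffices to show that multiplying $\beta$ on the top or bottom by a single generator of $K_{2n}$ does not change $SQ(\hat\beta)$.

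For each generator $g\in\{\sigma_1,\ \sigma_2\sigma_1^2\sigma_2,\ \sigma_{2k}\sigma_{2k-1}\sigma_{2k+1}\sigma_{2k}\}$, the piece of braid $g$ adjacent to a row of caps can be isotoped, together with those caps, back into the caps. Geometrically this is a rotation of one cap, a cap passing around its neighbour, or two caps exchanging places. I would follow this isotopy in the region $z\in[1,1.25]$ (or $[-0.25,0]$) and list the trisecants that are created or destroyed. I expect them to be of two kinds.

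\emph{Type 1 trisecants.} These are trisecants $(j,k,l)$ in which $(j,k)$ or $(k,l)$ is homotopic to a critical point. By Lemma~\ref{lemma1} they impose only $(j,l)_+=(j,l)_-$ and $(l,j)_+=(l,j)_-$, and by Case 1 of Lemma~\ref{lemma3} they do not change $SQ$.

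\emph{Type 2 pairs.} These are pairs in which one strand $k$, or two strands joined by a cap, passes through $R_{jl}$ and back. They are handled by Lemma~\ref{lemma2} and Case 2 of Lemma~\ref{lemma3}.

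The generators that are added or removed near the caps, namely secants joining a strand to one of the two cap strands, must then be expressed in terms of the old generators. I would do this exactly as in Parts 2--3 of the proof of Lemma~\ref{lemma4}: use the identifications $(i,k)_s\sim(i,l)_s\sim(j,l)_s\sim(j,k)_s$ and the relation $a\circ\overleftrightarrow{e_{j\pm}}=a$ to show that the new secants coincide with secants already present. This gives a bijection of generators that respects the relations.

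The main obstacle is the exchange generator $\sigma_{2k}\sigma_{2k-1}\sigma_{2k+1}\sigma_{2k}$. Here two caps pass one another, and for a moment the trisecants involve both caps at once: the secant joining the two caps can become horizontal and collinear with cap strands. It is then not immediate that every new trisecant is trivial, and the labels $e_{k\pm}$ and $e_{(k+1)\pm}$ get exchanged. My first approach would be to perturb the caps to different heights, so that one cap passes under the other one strand at a time. This breaks the move into two $\sigma_1$-type rotations and $\sigma_2\sigma_1^2\sigma_2$-type passages, each of which should involve only trisecants of Type 1 or Type 2. I would then check that the induced map relabels $\overrightarrow{e_{k\pm}}$ and $\overrightarrow{e_{(k+1)\pm}}$ consistently. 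Combining the stabilisation step, the braid-isotopy step and the cap moves gives $SQ(K)\cong SQ(K')$.
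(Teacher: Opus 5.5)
Up to the exchange generator you follow the paper's proof. You use Birman's double-coset theorem, Lemma~\ref{lemma4} for the stabilisations $\beta\mapsto\beta\sigma_{2m}$, and Theorem~\ref{thm1} for the braid part. For $\sigma_1$ and for $\sigma_2\sigma_1^2\sigma_2$ you observe that the new trisecants are Type 1 or form Type 2 pairs, with a $\Lambda_2$ reordering needed in the second case.

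The gap is in the step you yourself flag as the main obstacle. Your proposed decomposition of $\sigma_{2k}\sigma_{2k-1}\sigma_{2k+1}\sigma_{2k}$ into $\sigma_1$-type rotations and $\sigma_2\sigma_1^2\sigma_2$-type passages cannot exist. A rotation or a passage returns every cap to its own slot; in $\Sigma_{2n}$ it preserves each pair $\{2m-1,2m\}$. So no composition of them moves the $k$-th cap into the $(k+1)$-st slot, which is what the exchange does. This is exactly why the exchanges are needed as separate generators of $K_{2n}$.

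Staggering the cap heights does not remove this obstruction, since it concerns the end result of the move. It also takes you outside the setting in which $SQ(K)$ is defined. The identifications $(i,k)_s\sim(i,l)_s\sim(j,l)_s\sim(j,k)_s$ at $s=-0.25+\epsilon,\ 1.25-\epsilon$ assume that all critical points lie at one level. You would therefore also have to define $SQ$ for staggered caps and prove invariance under moving a critical point in height, and nothing in the paper provides that.

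What is needed instead is a direct enumeration of the trisecants created by the exchange. That enumeration shows your hope that only Type 1 and Type 2 trisecants occur is too optimistic. In the paper, besides the Type 1 trisecants (those having the cap secant $(2k-1,2k)$ or $(2k,2k-1)$ as a side) and the Type 2 pairs, the move produces $(2k+1,2k,2k+2)$ and $(2k+1,2k-1,2k+2)$ twice each, interleaved at heights $t_1<t_2<t_3<t_4$. These are not adjacent on a film-frame, so they fall under neither type.

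The paper handles them in three steps:
\begin{enumerate}
\item It reorders the sequence using $\Lambda_2$.
\item It removes the Type 1 trisecants by Lemma~\ref{lemma3}.
\item It checks by direct computation that the four remaining actions on $(2k+1,2k+2)$ cancel: $(2k+1,2k+2)_{t_4}=(2k+1,2k+2)_{t_1}*a*^{-1}a*a*^{-1}a=(2k+1,2k+2)_{t_1}$. This uses that strands $2k-1$ and $2k$ meet at a critical point, so the acting secants coincide. The same holds for $(2k+2,2k+1)$.
\end{enumerate}
Without this cancellation argument or an equivalent one, your proof does not cover the exchange generator. It therefore does not establish invariance under the full double-coset relation of Proposition~\ref{prop2}.
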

\begin{proof}
Since the invariance of the braid part has already been established in Theorem \ref{thm1}, we need to discuss the invariance of $SQ(K)$ under the stable equivalence of Birman knots, i.e., $K_{2n}$ and $\sigma_{2n}$, according to Proposition \ref{prop2}.

\textbf{Case 1.} If $\beta'=\sigma_{2j}\sigma_{2j-1} \sigma_{2j+1}\sigma_{2j}\beta$, denote $K:=\hat{\beta}$ and $K':=\hat{\beta}'$ . All new trisecants $\mathcal{T}_{K'}-\mathcal{T}_{K}$ are listed in order as follows
\begin{small}
\begin{align*}
\bigcup_{ _{\phantom +} \sigma_{2j\phantom 1 }}     &  \{{\color{blue}(2j-1,2j,2j+1)},(p,2j,2j+1),(2j+1,2j,q),(2j+1,2j,2j+2)\}\\
\bigcup_{\sigma_{2j-1}}& \{(p,2j-1,2j+1),(2j+1,2j-1,q),(2j+1,2j-1,2j+2),{\color{blue}(2j+1,2j-1,2j)}\}\\
\bigcup_{\sigma_{2j+1}}&  \{{\color{blue}(2j-1,2j,2j+2)},(2j+1,2j,2j+2),(p,2j,2j+2),(2j+2,2j,q)\}\\
\bigcup_{ _{\phantom 1}\sigma_{2j\phantom + }}    &  \{(2j+1,2j-1,2j+2),(p,2j-1,2j+2),(2j+2,2j-1,q),{\color{blue}(2j+2,2j-1,2j)}\},
\end{align*}
\end{small}where $p< 2j-1<2j<2j+1<2j+2<q$, and $\cup_{\sigma_{2j}}$ means collecting all the new trisecants caused by $\sigma_{2j}$, as shown in Figure \ref{fig:8}. Note that these trisecants are either of Type 1 or  Type 2 trivial trisecants (see Definition \ref{def5}).
\begin{itemize}
    \item Type 1: {\small{\color{blue}$(2j-1,2j,2j+1)$, $(2j+1,2j-1,2j)$, $(2j-1,2j,2j+2)$, $(2j+2,2j-1,2j)$}}.
    \item Type 2: {\small $\{(p,2j,2j+1),(p,2j-1,2j+1)\}$, $\{(2j+2,2j,q),(2j+1,2j-1,q)\}$, $\{(p,2j,2j+2),(p,2j-1,2j+2)\}$, $\{(2j+2,2j,q),(2j+2,2j-1,q)\}$}.
    \item Type 3: {\small $\color{red}\{(2j+1,2j,2j+2)_{t_1},(2j+1,2j,2j+2)_{t_3}\}$, $\color{red}\{(2j+1,2j-1,2j+2)_{t_2},(2j+1,2j-1,2j+2)_{t_4}$}, where $t_1<t_2<t_3<t_4$.
\end{itemize}
As shown in Figure \ref{fig:8}, the first four pairs are Type 2 since \emph{the middle secant} $(2j-1,2j)=\overrightarrow{e_{j\pm}} $ (see Definition \ref{def5}). The Type 3 is analogous to Type 2 which can be canceled by calculation. Specifically, the original order of these trisecants are following
\begin{small}
\begin{align*}
\cdots &
{\color{red}(2j+1,2j,2j+2)_{t_1}} 
{(2j+1,2j-1,q)} 
{\color{red}(2j+1,2j-1,2j+2)_{t_2}} \\
&{\color{blue}(2j+1,2j-1,2j)}
{\color{blue}(2j-1,2j,2j+2)}
{\color{red}(2j+1,2j,2j+2)_{t_3}} \\
&{(p,2j,2j+2)} {(2j+2,2j,q)}
{\color{red}(2j+1,2j-1,2j+2)_{t_4}}\cdots
\end{align*}
\end{small}
Using $\Lambda_2$, we have
\begin{small}
\begin{align*}
\cdots &
{\color{red}(2j+2,2j,2j+1)} 
{\color{red}(2j+2,2j-1,2j+1)} 
{\color{blue}(2j,2j-1,2j+1)}\\
& {\color{blue}(2j-1,2j,2j+2)}
{\color{red}(2j+1,2j,2j+2)} 
{\color{red}(2j+1,2j-1,2j+2)}\cdots
\end{align*}
\end{small}
Since ${\color{blue}(2j,2j-1,2j+1)}$ and ${\color{blue}(2j-1,2j,2j+2)}$ are Type 1, thus removing them induces an isomorphism (Lemma \ref{lemma3}) and four trisecants remain;
{\footnotesize
\begin{align*}
{\color{red}(2j+2,2j,2j+1)_{t_1}} 
{\color{red}(2j+2,2j-1,2j+1)_{t_2}}
{\color{red}(2j+1,2j,2j+2)_{t_3}}
{\color{red}(2j+1,2j-1,2j+2)_{t_4}.}
\end{align*}}
It is straightforward to verify that any such secant $(j,k)_{t_1}$ satisfies $(j,k)_{t_1} = (j,k)_{t_4}$ (left to the readers) passing through Type 3, while $(2j+1,2j+2)_{t_1}$ requires special attention: since the strands $2j-1$ and $2j$ are connected to a critical point, once their orientation is fixed, the alternating signs of Type 3 induce these equations:
$$(2j+1,2j+2)_{t_4} = (2j+1,2j+2)_{t_1} * a *^{-1} a  * a*^{-1} a=(2j+1,2j+2)_{t_1},$$
where $a=(2j,2j+1)_0$ and $*=\circ,/$, as shown in Figure \ref{fig:thm2}. The equation $(2j+2,2j+1)_{t_1}=(2j+2,2j+1)_{t_4}$ holds by the same reasoning. 
\begin{figure}
    \centering
    \includegraphics[width=0.75\linewidth]{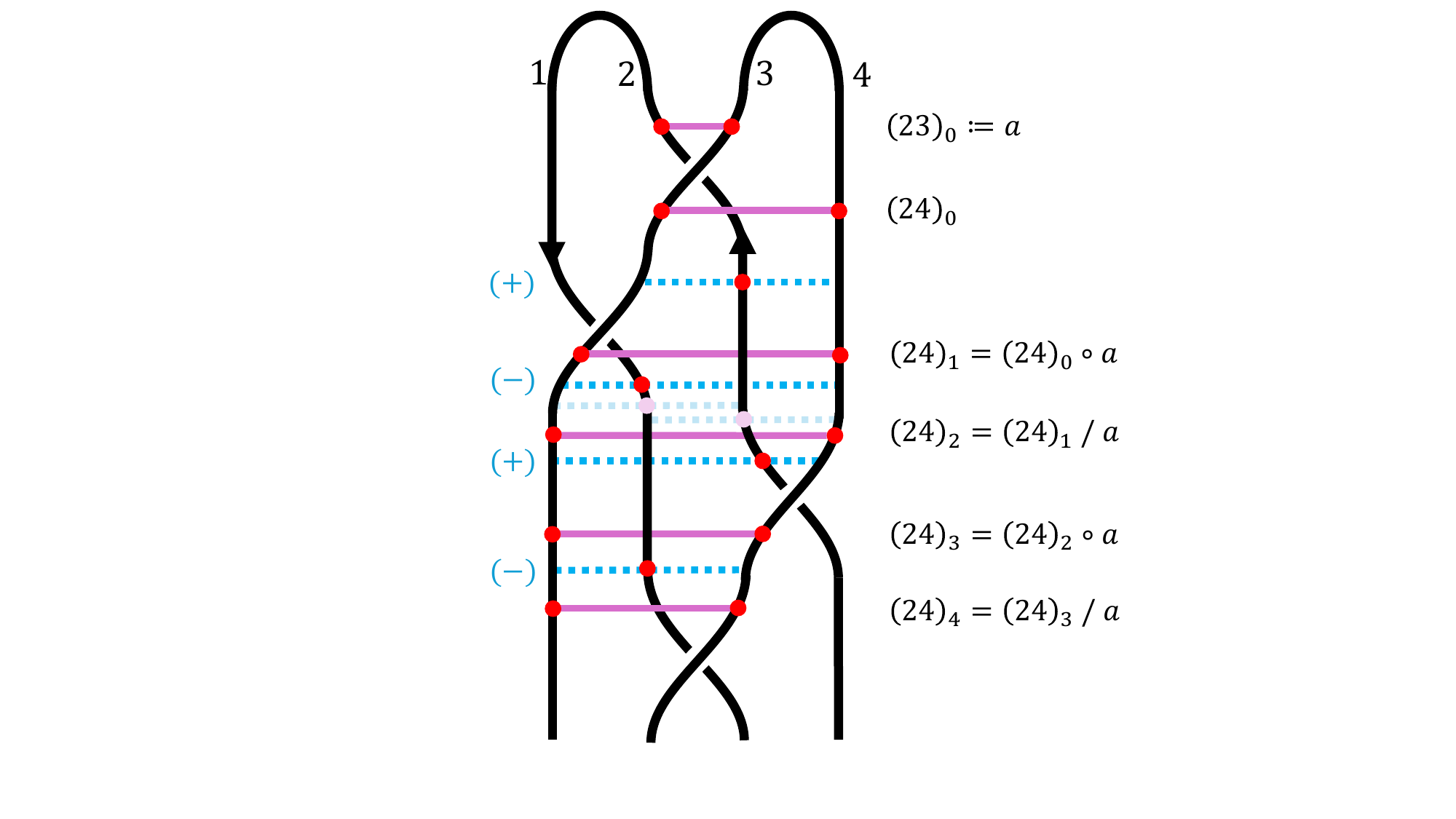}
    \caption{$\sigma_{2j}\sigma_{2j-1} \sigma_{2j+1}\sigma_{2j}$, taking $\sigma_{2}\sigma_{1} \sigma_{3}\sigma_{2}$ as a specific instance.}
    \label{fig:thm2}
\end{figure}

\textbf{Case 2.} If $\beta'=\sigma_{2}\sigma_{1}^2 \sigma_{2}\beta$. All new trisecants $\mathcal{T}_{K'}-\mathcal{T}_{K}$ are listed in order as follows
\begin{small}
\begin{align*}
\bigcup_{ \sigma_2} \{{\color{blue}(123)},{(32p)},{(324)}\}
\bigcup_{\sigma_1} \{{(31p)},{(314)},{\color{blue}(312)}\}\\
\bigcup_{\sigma_1} \{{(13p)},{\color{blue}(134)},{(132)}\}
\bigcup_{ \sigma_2} \{{(132)},{(23p)},{\color{blue}(234)}\}
\end{align*}
\end{small}
where $p>4$. We follow the same classification:
\begin{itemize}
    \item Type 1: ${\color{blue}(123)}, {\color{blue}(312)}, {\color{blue}(134)}, {\color{blue}(234)}$.
    \item Type 2: $\{(32p),(31p)\},\{(324),(314)\},\{(132),(132)\},\{(13p),(23p)\}.$
\end{itemize}
Using $\Lambda_2$, exchange $(423)$ with $(p13)$, then we obtain all the Type 2 trivial trisecants above. Note that the pair $\{(p31),(p32)\}$ is also Type 2, as shown in Figure \ref{fig:9}, since the surfaces $R_{1p}$ and $R_{2p}$ are connected while $(p31)$ and $(p32)$ are two adjacent trisecants in one film-frame $D=R_{1p}\cup R_{2p}$. 

\textbf{Case 3.}  If $\beta'=\beta\sigma_{1}$, all new trisecants $\mathcal{T}_{K'}-\mathcal{T}_{K}=\cup_{\sigma_1}\{(p12)\}$ where $p>2$. Obviously, each $(p12)$ is Type 1 trivial. 

\textbf{Case 4.} The case of  $\beta'=\beta\sigma_{2n}$ is proved in Lemma \ref{lemma4}.

That is, whatever the new trisecants comes from cases 1, 2, 3 and 4, $SQ(K)\cong SQ(K')$ according to Lemmas \ref{lemma3} and \ref{lemma4}, and it completes the proof.
\end{proof}

\begin{figure}[H]  % [H] 强制图片位置不浮动
    \centering
    \includegraphics[width=0.75\linewidth]{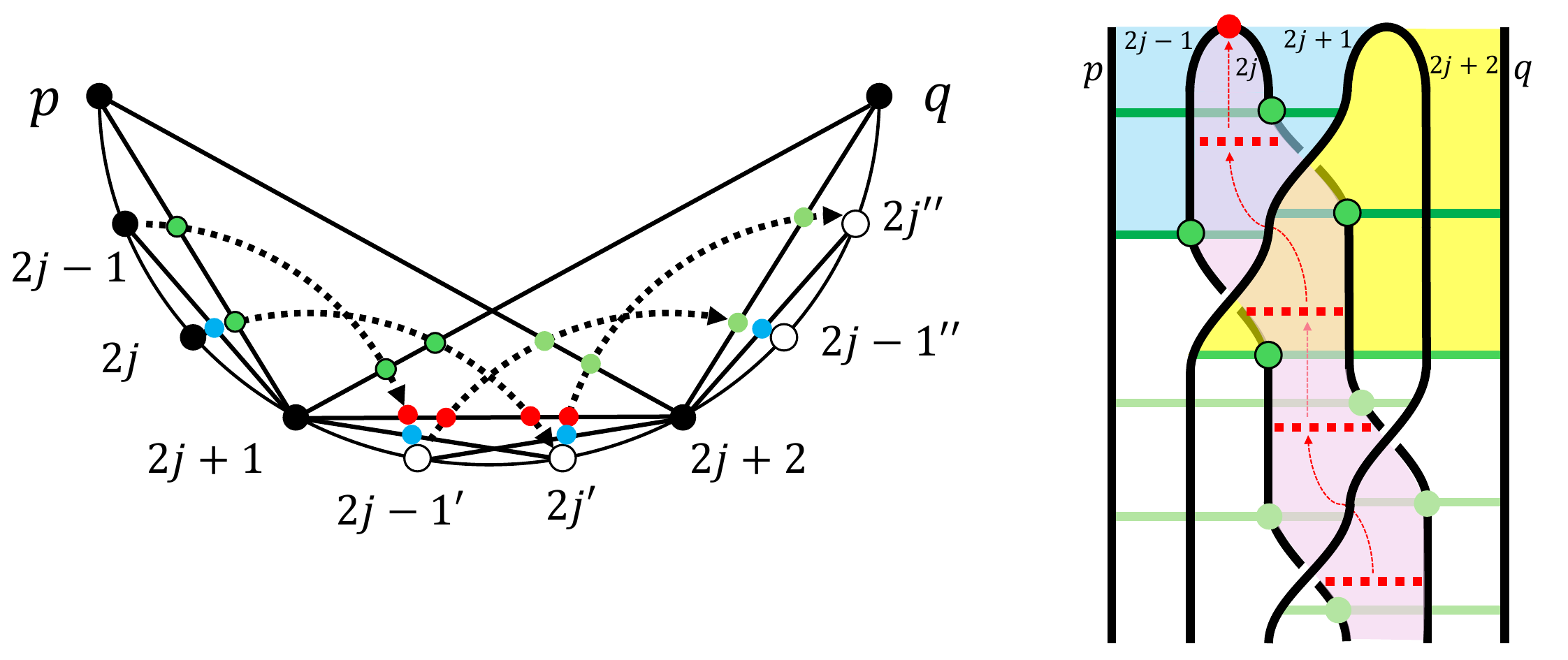}
    \caption{$\sigma_{2j}\sigma_{2j-1} \sigma_{2j+1}\sigma_{2j}$.}
    \label{fig:8}
\end{figure}

\begin{figure}[H]  % [H] 强制图片位置不浮动
    \centering
    \includegraphics[width=0.75\linewidth]{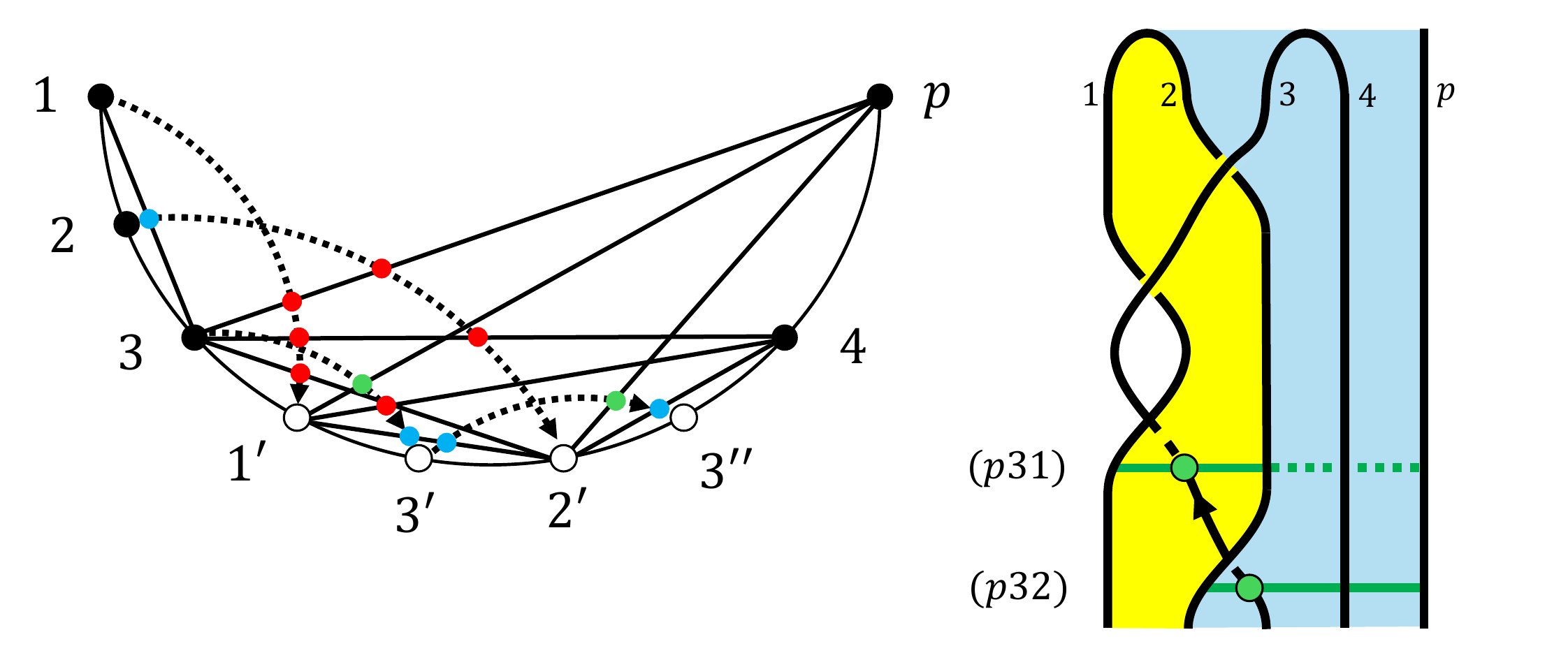}
    \caption{$\sigma_2 \sigma_1^2 \sigma_2$.}
    \label{fig:9}
\end{figure}
%%%%%%%%%%%%%%%%%%%%%%%%%%%%%%%%%%%%%%%%%%%%%%%%%%% Section 3
\begin{example} The secant quandles of the trivial knot and the trefoil are as follows.
\begin{itemize}
    \item $SQ(\text{trivial knot})=\Gamma\langle \overrightarrow{e},\overleftarrow{e}\rangle$.
    \item $SQ(\text{trefoil})=\Gamma\langle a\rangle$.
    %\item $SQ(\text{Hopf }\text{link})=\Gamma\langle \overleftrightarrow{e_{j\pm}},\overleftrightarrow{a}\mid \overrightarrow{e_{j-}}/\overrightarrow{a}=\overrightarrow{e_{j+}},\overleftarrow{e_{j-}}\circ \overleftarrow{a}=\overleftarrow{e_{j+}},\overleftrightarrow{a}\circ \overleftrightarrow{e_{j\pm}} =\overleftrightarrow{a},j=1,2 \rangle$, where both two components of Hopf link with positive orientation, and $\overrightarrow{e_{j+}}\in \mathcal{E}_K$ denotes $j$-th right vector secant at top level of $K$.
\end{itemize}
The calculation for the $SQ$ of the trivial knot is obvious.\\
The calculation for the $SQ$ of the trefoil is as follows. At the top of trefoil, we have $(12)_{\epsilon }=\overrightarrow{e_{1+}}$, $(34)_\epsilon=\overrightarrow{e_{2+}}$, $(13)_\epsilon =(14)_\epsilon=(24)_\epsilon=(23)_\epsilon=\overrightarrow{a}$ and their reverse secants. Then $\overrightarrow{a} = (23)_\epsilon$ is secant-homotopic to $(23)_{1-\epsilon}$ following the ruled surface $R_{23}$ since $R_{23}$ has no intersection with other strands. Since the pairs of strands $j_{1},j_{3}$ and $j_{2},j_{4}$ make minima, we obtain $(23)_{1-\epsilon} = (41)_{1-\epsilon}$. Then following the ruled surface $R_{14}$ we can show that $(41)_{1-\epsilon}$ is secant-homotopic to $(41)_{\epsilon} = \overleftarrow{a}$. Let us denote $a= \overrightarrow{a} =\overleftarrow{a}$.

We will show that $\overleftrightarrow{e_{1+}}=\overleftrightarrow{e_{2+}}=a$. Then the equalities $\overleftrightarrow{e_{1-}}=\overleftrightarrow{e_{2-}}=a$ follows from the symmetry of the trefoil. Using the 1st and the 2nd nontrivial trisecants $(132)_{t_1}$ and $(324)_{t_2}$, we obtain relations
$$\begin{array}{ll}
 (12)_{t_1}=(12)_{t_0}\circ (13)_{t_0},\quad
 (34)_{t_2}=(34)_{t_1} / (32)_{t_1}.
\end{array}$$
We can see that 
\begin{eqnarray*}
    (12)_{t_1}\sim (12)_{1-\epsilon} \sim a,\quad 
    (12)_{t_0}= \overrightarrow{e_{1+}},\quad
    (13)_{t_0} =\overrightarrow{a},
\end{eqnarray*}
and
\begin{eqnarray*}
    (34)_{t_2}\sim (34)_{1-\epsilon} \sim a,\quad
    (34)_{t_1}= \overrightarrow{e_{2+}},\quad
    (23)_{t_1} \sim (23)_{t_0} =\overrightarrow{a}.
\end{eqnarray*}
From the above observation, it follows that
$$\begin{array}{ll}
 \overrightarrow{e_{1+}}=a / a=a= a\circ a=\overrightarrow{e_{2+}}.
\end{array}$$
Analogously, we can show 
$$\begin{array}{ll}
 \overleftarrow{e_{1+}}=a=\overleftarrow{e_{2+}},
\end{array}$$
and therefore we obtain $SQ(\text{trefoil})=\Gamma\langle a\rangle$.
\begin{figure}  
    \centering
    \includegraphics[width=0.75\linewidth]{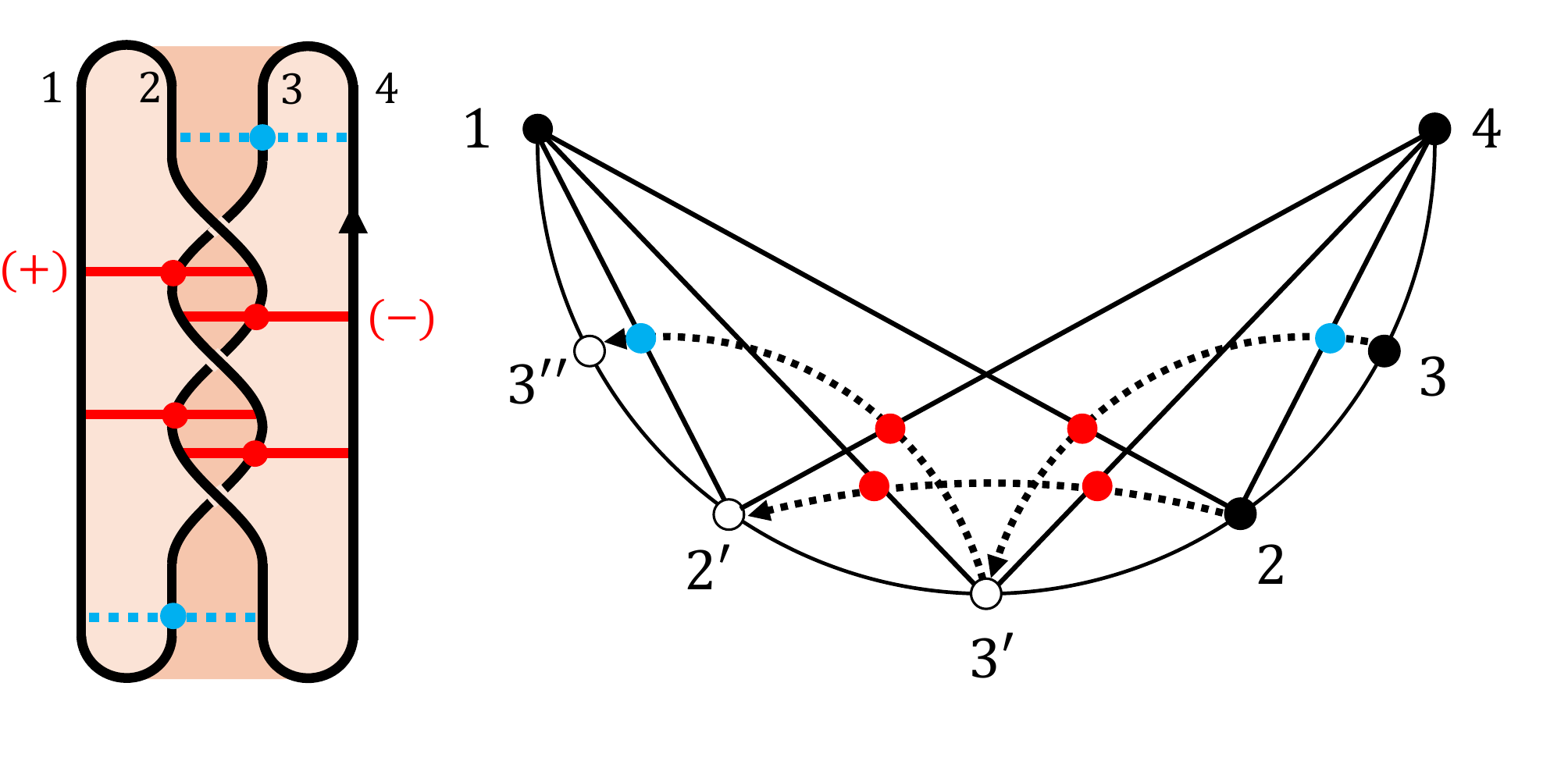}
    \caption{The $SQ$ of trefoil.}
    \label{fig:13}
\end{figure}
\end{example}
\section{Some properties of secant-quandle}

\begin{lemma}
    $SQ(K)\cong SQ(K^*)$, where $K^*$ is mirror image of $K$.
\end{lemma}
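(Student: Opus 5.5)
We take the mirror to be the reflection $\rho(x,y,z)=(-x,y,z)$ of $\mathbb{R}^3$. This reflection preserves the height $z$, so it respects all the structure used to define $SQ$. If $K=\hat\beta$ is a Birman knot, then $K^*=\rho(K)$ is again a Birman knot. It is the plat closure of $\beta^*=\rho(\beta)$, the braid obtained by replacing each $\sigma_i^{\pm1}$ with $\sigma_i^{\mp1}$, up to renumbering the strands from left to right. By Theorem \ref{thm2} we may compute $SQ(K^*)$ from this particular diagram. The plan is to write down an explicit bijection between the presentations $\Gamma\langle \mathcal{S}_K\mid\mathcal{T}_K,\mathcal{I}_K\rangle$ and $\Gamma\langle \mathcal{S}_{K^*}\mid\mathcal{T}_{K^*},\mathcal{I}_{K^*}\rangle$.

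\textbf{Step 1 (generators).} Since $\rho$ preserves each horizontal plane $z=t$, it maps secants at height $t$ to secants at height $t$. It maps ruled surfaces $R_{jk}$ to $R_{\rho(j)\rho(k)}$, and it maps horizontal trisecants to horizontal trisecants at the same height. It therefore maps film-frames to film-frames, and local secant homotopies to local secant homotopies. Hence it induces a bijection $\rho_*:\mathcal{S}_K\to\mathcal{S}_{K^*}$ and a bijection $\mathcal{T}_K\to\mathcal{T}_{K^*}$. Critical points go to critical points, so $\rho_*(\mathcal{E}_K)=\mathcal{E}_{K^*}$. However, $\rho$ reverses left and right, so $\overrightarrow{e_{j\pm}}$ is sent to $\overleftarrow{e_{j'\pm}}$, where $j'$ is the mirrored number of the critical point. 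The $\mathcal{I}$-relations of Definition \ref{def9} are symmetric in $\overrightarrow{e}$ and $\overleftarrow{e}$, and the identifications near critical points are clearly preserved. So $\rho_*$ respects $\mathcal{I}_K$.

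\textbf{Step 2 (signs).} Next we compare the relations coming from trisecants. For a trisecant $(j,k,l)_{t_0}$ we have $\mathrm{sign}=\det((j,l),Oz,T_k\beta)$. Because $\rho$ is orientation reversing and fixes $Oz$, we get $\mathrm{sign}\,\rho(j,k,l)=-\mathrm{sign}(j,k,l)$. Therefore relation (\ref{eq:1}), $(j,l)_+=(j,l)_-\circ(j,k)$, in $K$ corresponds in $K^*$ to $\rho_*(j,l)_+=\rho_*(j,l)_-/\rho_*(j,k)$. Similarly, (\ref{eq:2}) has its $/$ replaced by $\circ$. Thus $\rho_*$ carries the presentation of $SQ(K)$ to the presentation of $SQ(K^*)$ with $\circ$ and $/$ interchanged everywhere. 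The same happens if we instead use the convention $\mathrm{sign}(j,k,l)<0\Leftrightarrow\mathrm{sign}(l,k,j)>0$ noted after Definition \ref{def3}.

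\textbf{Step 3 (the swap).} Finally we use the general fact that for any quandle $(Q,\circ,/)$, the same set with the two operations exchanged, $(Q,/,\circ)$, is again a quandle. Idempotency and right invertibility are symmetric in the two operations. For self-distributivity: right multiplication by $b$ is an automorphism for $\circ$, so its inverse, right division by $b$, is also an automorphism. We must also check that the interchange does not disturb the relation $a\circ e=a$ in (\ref{eq:6}). This relation is equivalent to $a/e=a$, so it is unaffected. Consequently, the swapped presentation defines the quandle $SQ(K)$ with its operations exchanged. This is not literally the same structure, and here lies the main obstacle. We expect to resolve it in one of two ways. The first option is to show that the identity map on generators, sending $x\circ y$ to $x/y$, extends to an isomorphism between $\Gamma\langle S\mid R\rangle$ and $\Gamma\langle S\mid R^{\mathrm{swap}}\rangle$. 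This holds because free quandles admit the operation-swapping automorphism of the underlying term algebra, which maps the relation set $R$ onto $R^{\mathrm{swap}}$. The second option is to compose $\rho$ with the reflection $z\mapsto 1-z$, or with a reversal of the strand orientations, which also flips the signs in Definition \ref{def3}. In that case we would check that the two sign flips cancel, and invoke Theorem \ref{thm2} to return to an admissible Birman diagram. We would try the first option first, since it needs only Steps 1 and 2 together with the universal property of the free quandle.
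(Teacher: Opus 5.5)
Your Steps 1 and 2 are sound, and they are in substance the whole of the paper's argument. The paper notes that mirroring reverses trisecant signs, so the relations of $SQ(K^*)$ are those of $SQ(K)$ with $\circ$ and $/$ interchanged. It then simply asserts an isomorphism $(SQ(K),\circ)\to(SQ(K),/)$. You have correctly identified this last step as the crux, but neither of your options proves it.

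In option 1, the map $\tau$ on the free quandle $F(S)$ with $\tau(s)=s$ and $\tau(x\circ y)=\tau(x)/\tau(y)$ is an isomorphism from $F(S)$ onto its dual $(F(S),/,\circ)$. It is not an automorphism of $F(S)$: a quandle endomorphism of $F(S)$ fixing every generator is the identity, so $\tau$ being one would force $x\circ y=x/y$ for generators, which is false. Passing to quotients, $\tau$ gives only $\Gamma\langle S\mid R\rangle\cong\Gamma\langle S\mid R^{\mathrm{swap}}\rangle^{\mathrm{dual}}$. That is your Step 2 restated, not a resolution of it.

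Moreover, the general principle $\Gamma\langle S\mid R\rangle\cong\Gamma\langle S\mid R^{\mathrm{swap}}\rangle$ is false. Reflecting a knot diagram by $\rho$ turns the Wirtinger-type presentation of the classical knot quandle $Q(K)$ into a presentation of $Q(K^*)$ differing by exactly this swap. Yet by Joyce's classification $Q(K)\not\cong Q(K^*)$ for a knot such as $8_{17}$, which is neither invertible nor positive amphichiral.

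Option 2 fails as well. The composite of $\rho$ with $z\mapsto 1-z$ is the rotation $(x,y,z)\mapsto(-x,y,1-z)$, which preserves orientation. It carries $K$ to a knot isotopic to $K$, so the two swaps cancel but the conclusion is just $SQ(K)\cong SQ(K)$.

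Composing $\rho$ with reversal of all strand orientations also cancels the swaps, but the knot produced is the reversed mirror $-K^*$. So you get $SQ(K)\cong SQ(-K^*)$. Turning this into the statement needs $SQ(L)\cong SQ(-L)$. Reversal alone already interchanges $\circ$ and $/$, so that is again equivalent to $SQ(L)$ being isomorphic to its dual.

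What your argument actually establishes is $SQ(K^*)\cong SQ(K)^{\mathrm{dual}}$ and $SQ(-K^*)\cong SQ(K)$, the exact analogues of the classical facts for $Q(K)$. The missing ingredient is a genuine isomorphism between $SQ(K)$ and its dual quandle. The paper's one-line proof does not supply it either.
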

\begin{proof}
    For each trisecant $(j,k,l)\in \mathcal{T}_K$, then $\text{sign}(j,k,l)=\text{sign} (j,k,l)^*=\text{sign}(l',k',j')$, it induces the following isomorphism: 
$$(SQ(K),\circ) \rightarrow (SQ(K),/).$$
\end{proof}
\begin{lemma}
    $SQ(K)\cong SQ(-K)$, where $-K$ is inverted knot of $K$.
\end{lemma}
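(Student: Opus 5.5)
The plan is to realize $-K$ on the same geometric object as $K$, with every strand orientation reversed, and then compare the two presentations generator by generator. The relations of Definition~\ref{def3} and Definition~\ref{def9} depend on the strand orientations only through the sign $\mathrm{sign}(j,k,l)_{t_0}=\det((j,l),Oz,T_k\beta)_{t_0}$. The secants, their homotopy classes $\mathcal{S}_K$, the film-frames, the critical-point identifications, and the set $\mathcal{E}_K$ depend only on the unoriented embedding. Hence, as sets, $\mathcal{S}_{-K}=\mathcal{S}_K$, $\mathcal{E}_{-K}=\mathcal{E}_K$, and the trisecants coincide. What changes is the tangent vector $T_k\beta$, which is replaced by $-T_k\beta$. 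So every trisecant changes sign: $\mathrm{sign}_{-K}(j,k,l)=-\mathrm{sign}_K(j,k,l)$.

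Consequently, a relation $(j,l)_+=(j,l)_-\circ(j,k)$, $(l,j)_+=(l,j)_-/(l,k)$ of $SQ(K)$ becomes, in $SQ(-K)$, the relation $(j,l)_+=(j,l)_-/(j,k)$, $(l,j)_+=(l,j)_-\circ(l,k)$. The relations $\mathcal{I}_K$ are unchanged. By the definition of $a\circ\overleftrightarrow{e}=a$, together with $a/b$ being the inverse operation of $\circ b$, they are also equivalent to $a/\overleftrightarrow{e}=a$. In other words, $SQ(-K)$ is obtained from $SQ(K)$ by interchanging $\circ$ and $/$ throughout. This is the same phenomenon as in the previous lemma on $K^*$.

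The key step is to show that interchanging $\circ$ and $/$ does not change the isomorphism class of the presented quandle. For any quandle $(Q,\circ,/)$, the structure $(Q,/,\circ)$ is again a quandle:
\begin{itemize}
\item idempotency $a/a=a$ follows from $a\circ a=a$;
\item right invertibility is symmetric in the two operations;
\item self-distributivity of $/$ follows by applying the inverse maps to the $\circ$-distributivity.
\end{itemize}
This duality sends free quandles to free quandles and presentations to the dual presentations. The identity map on generators $\mathcal{S}_K$ therefore gives a bijection $SQ(-K)\to (SQ(K),/,\circ)$ that respects all relations.

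The main obstacle is converting this "dual" identification into a genuine isomorphism $SQ(-K)\cong SQ(K)$ of $\circ$-quandles, rather than an anti-isomorphism. I would first try to combine orientation reversal with the reversal of all secants, $(j,l)\mapsto(l,j)$. The relation for $(l,j)$ in each trisecant already carries the opposite operation to that for $(j,l)$. So the map $f((j,l))=(l,j)$, extended to $\mathcal{E}_K$ by $\overrightarrow{e_{j\pm}}\mapsto\overleftarrow{e_{j\pm}}$, should carry the relations \eqref{eq:1}, \eqref{eq:2} of $K$ exactly onto those of $-K$. In doing so I must check:
\begin{itemize}
\item that $f$ preserves secant homotopy classes and the critical-point identifications of Definition~\ref{def9}, which is clear since they are symmetric under swapping endpoints;
\item that it preserves \eqref{eq:6};
\item the sign bookkeeping using the note that $\mathrm{sign}(j,k,l)<0$ iff $\mathrm{sign}(l,k,j)>0$.
\end{itemize}
If that sign bookkeeping fails, my fallback is to invoke the mirror lemma. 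The bijection $(SQ(K),\circ)\to(SQ(K),/)$ established there, composed with the identification above, would give the result.
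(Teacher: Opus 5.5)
Your first step is the paper's: reversing every strand replaces $T_k\beta$ by $-T_k\beta$, so every trisecant changes sign. Meanwhile $\mathcal{S}_K$, the critical-point identifications and the relations $a\circ e=a$ for $e\in\mathcal{E}_K$ (equivalent to $a/e=a$) are untouched. Hence the identity on generators identifies $SQ(-K)$ with the dual quandle $(SQ(K),/)$.

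The secant-reversal map $f\colon(x,y)\mapsto(y,x)$ does not turn this into an isomorphism, and the obstruction is not the sign bookkeeping. Take a trisecant with $\mathrm{sign}_K(j,k,l)>0$.
\begin{itemize}
\item The map $f$ carries $(j,l)_+=(j,l)_-\circ(j,k)$ to $(l,j)_+=(l,j)_-\circ(k,j)$.
\item The relation of $SQ(-K)$ for $(l,j)$ at this trisecant is $(l,j)_+=(l,j)_-\circ(l,k)$.
\end{itemize}
The operations match; the acting secants do not. In Definition~\ref{def3} the acting secant always runs from the initial point of the acted-on secant to the middle point $k$. But $f$ turns $(j,k)$ into $(k,j)$, which starts at $k$. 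The classes $(k,j)\subset R_{jk}$ and $(l,k)\subset R_{kl}$ are in general different generators. The required image of $(j,k)$ would be $(l,k)$, which depends on the third point of the trisecant, so there is no natural way to repair $f$. That step has to be discarded.

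What remains is your fallback, and that is precisely the paper's proof. It composes the mirror-lemma map $(SQ(K),\circ)\to(SQ(K),/)$ with the sign-flip identification $(SQ(K),/)\to(SQ(-K),\circ)$. Stated cleanly: reflection in a vertical plane also flips every trisecant sign, so $SQ(K^*)$ and $SQ(-K)$ are both isomorphic to $(SQ(K),/)$. Hence $SQ(-K)\cong SQ(K^*)\cong SQ(K)$ by the mirror lemma.

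Be aware that all the content then lies in the self-duality $(SQ(K),\circ)\cong(SQ(K),/)$. This is not a formal consequence of the quandle axioms: the dual of the knot quandle $Q(K)$ is $Q(-K)$, which is not isomorphic to $Q(K)$ for a knot with no symmetries such as $9_{32}$. The paper's mirror lemma states this self-duality without an explicit construction, so your argument is exactly as strong as that lemma.
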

\begin{proof}
    For each trisecant $(j,k,l)\in \mathcal{T}_K$, we have $\text{sign}(-(j,k,l))=-\text{sign}(j,k,l)$. It follows that the inverse map $-:(SQ(K),\circ)\rightarrow (SQ(-K),/)$ is an isomorphism, being precisely the composition of the following maps:
$$\begin{CD}
   -: (SQ(K),\circ)@>\cong>> (SQ(K),/)@>\cong>> (SQ(-K),\circ)@>\cong>> (SQ(-K),/)
\end{CD}$$
\end{proof}
%\begin{lemma}
    %For $SQ(K\sqcup O)$ $(i,j)$ and $(i,2n+1)$ are not independent, because $(i,j)$ acts on $(i,2n+1)$ But $(2n+1,i)$ is independent with $(j,i)$. It means, $SQ(K\sqcup O) = Q\sqcup \{(2n+1,i)| (2n+1,i,j)\}$.
%\end{lemma}
\begin{theorem}\label{thm3}
    There is a monomorphism from $Q(K)$ to $SQ(K\sqcup O)$.
\end{theorem}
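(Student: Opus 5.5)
We will realize $K\sqcup O$ concretely as a Birman knot. Take $K=\hat\beta$ with $\beta\in B_{2n}$, and adjoin the unknot $O$ as two extra straight strands $2n+1,2n+2$ placed far to the right of $\beta$, joined by a maximum and a minimum. The natural candidates for the image of an arc of $K$ are the secants $(2n+1,i)_t$ from the unknot to strand $i$. The intuition is that such a secant is controlled exactly the way the Wirtinger generator of the arc through $i_t$ is controlled when seen from the point at infinity. We do not want to place $O$ far to the right, because then no trisecant has the form $(2n+1,k,i)$. Instead we place $O$ "above" the diagram, in the projection direction: strands $2n+1,2n+2$ are vertical lines lying over the plane of the braid, far in the viewing direction. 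A ruled surface $R_{2n+1,i}$ is then essentially the cone from a far point onto strand $i$, i.e.\ the projection cylinder of strand $i$. Strand $k$ meets it exactly when $k$ crosses over or under $i$ in the diagram, and it must be the overstrand by the choice of side.

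\textbf{Step 1 (generators).} Secants $(2n+1,i)$ stay in one homotopy class along $R_{2n+1,i}$ until a trisecant $(2n+1,k,i)$ occurs, i.e.\ until $i$ passes under $k$. These classes therefore correspond bijectively to the arcs of the diagram of $K$. At the critical points of $K$, the identification in Definition \ref{def9} glues $(2n+1,i)$ with $(2n+1,j)$ when $i,j$ form a maximum or minimum, which matches arcs continuing through plat caps.

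\textbf{Step 2 (relations).} At a crossing where $i$ passes under $k$, the trisecant $(2n+1,k,i)$ gives, by (\ref{eq:1}), $(2n+1,i)_+=(2n+1,i)_-\circ(2n+1,k)$ or the $/$ version according to its sign. This sign is precisely the crossing sign, since $\det((2n+1,i),Oz,T_k)$ computes it. This is the Wirtinger quandle relation $c=a\circ b$. Define
$$\phi:Q(K)\to SQ(K\sqcup O),\qquad x_{\text{arc }i}\mapsto (2n+1,i).$$
The defining relations are sent to relations, so $\phi$ is a well-defined homomorphism.

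\textbf{Step 3 (injectivity).} This is the main obstacle. We will construct a left inverse $\psi:SQ(K\sqcup O)\to Q(K)$ defined on all generators, and check that every relation in $\mathcal T_{K\sqcup O}$ and $\mathcal I$ is respected. We will try the assignment
\begin{itemize}
\item $(2n+1,i)\mapsto x_i$ and $(2n+2,i)\mapsto x_i$;
\item $(i,2n+1)\mapsto x_i$, or a fixed base element;
\item $(i,j)\mapsto x_i$, the arc of the source strand, for $i,j\le 2n$, and similarly for the $e$-secants.
\end{itemize}
With $(i,j)\mapsto x_i$, a relation $(j,l)_+=(j,l)_-\circ(j,k)$ becomes $x_j=x_j\circ x_j$, which is valid by idempotency. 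The relation (\ref{eq:2}) becomes $x_l=x_l/x_l$, also valid. The only relations sending different arcs are the trisecants $(2n+1,k,i)$, which map to the Wirtinger relations. Relations $(2n+2,k,i)$ and the $O$-critical-point identifications $(2n+1,i)\sim(2n+2,i)$ are also consistent. Relations (\ref{eq:6}), $a\circ e=a$, hold when $e\mapsto x$ and $a\mapsto$ the same element. They fail otherwise, so we will verify these separately, choosing the $e$-images carefully or adjusting the placement of $O$ to make them trivial (Type 1).

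Then $\psi\circ\phi=\mathrm{id}$, so $\phi$ is injective. The delicate points are the signs and the critical-point relations (\ref{eq:6}), which we expect to handle by the orientation conventions of Definition \ref{def3}.
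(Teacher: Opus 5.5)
Your Steps 1--2 are the paper's construction. The classes of $(2n+1,i)$ are the arcs of the diagram $D$ obtained by projecting each slice from $(2n+1)_t$, and the trisecants with $2n+1$ as an endpoint are its crossings. One correction: placing $O$ far to the right, as the paper does with $\beta\in B_{2n}\subset B_{2n+2}$, \emph{does} produce the trisecants $(2n+1,k,i)$. It only excludes $2n+1$ as the middle point, so your ``viewing direction'' placement is the same construction.

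The gap is Step 3. The left inverse you propose is not well defined, and for a nontrivial knot no left inverse of $\phi$ with values in $Q(K)$ can exist.
\begin{itemize}
\item The assignment $(i,j)\mapsto x_i$ is ill-defined. The class of $(i,j)$ changes only at trisecants on $R_{ij}$, whereas strand $i$ changes arcs of $D$ at trisecants on $R_{2n+1,i}$. These events are unrelated, so one class of $(i,j)$ generally contains secants whose source lies on different arcs.
\item Relation (\ref{eq:6}) of Definition \ref{def9} is imposed for \emph{every} secant $a$ and every $e\in\mathcal E$, independently of trisecants. No placement of $O$ makes it ``Type 1''. Take $a=(2n+1,m)$ and $e$ a cap secant of $K$. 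Any left inverse must then satisfy $x_m\circ\psi(e)=x_m$ for all arcs $x_m$, so $\psi(e)$ acts trivially on all of $Q(K)$.
\item For a nontrivial knot no element of $Q(K)$ acts trivially. For the trefoil, a nontrivial $3$-colouring $c\colon Q(K)\to R_3$ gives $c(x\circ y)=2c(y)-c(x)\neq c(x)$ for any arc $x$ coloured differently from $y$.
\end{itemize}

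The missing idea is to enlarge the target. Let $\widehat Q=Q(K)\sqcup\{z\}$, where $Q(K)$ keeps its operations and $z\circ x=z/x=z$, $x\circ z=x/z=x$. This is a quandle. Define $\psi$ as follows.
\begin{itemize}
\item Send $(2n+1,i)$ and $(2n+2,i)$, $i\le 2n$, to the arc of $i$ in the projection from that strand. For $O$ thin the two diagrams are planar isotopic, which matches arcs compatibly with the cap identification $(2n+1,i)\sim(2n+2,i)$.
\item Send every other secant, including all $(i,j)$, all $(i,2n+1)$ and all elements of $\mathcal E$, to $z$.
\end{itemize}
The key observation is that in (\ref{eq:1}) and (\ref{eq:2}) the acting secant always has the same source as the acted-on one: $(j,l)$ is acted on by $(j,k)$, and $(l,j)$ by $(l,k)$. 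Hence:
\begin{itemize}
\item every trisecant relation becomes either a Wirtinger relation of $D$, or $x=x\star z$ (a trisecant $(2n+1,2n+2,i)$, which is not a crossing of $D$), or $z=z\star w$;
\item relation (\ref{eq:6}) becomes $x\circ z=x$ or $z\circ z=z$;
\item the critical-point identifications respect arcs.
\end{itemize}
Then $\psi\circ\phi$ is the inclusion $Q(K)\hookrightarrow\widehat Q$, so $\phi$ is injective.

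Repaired this way, your route differs from the paper's. The paper asserts a free-product splitting $SQ(K\sqcup O)=\Gamma\langle\mathcal S_0\mid\mathcal T_0\rangle*\Gamma\langle\mathcal S_1\mid\mathcal T_1\rangle$ and identifies the first factor with the Wirtinger presentation of $D$. The retraction has the advantage of accounting for relation (\ref{eq:6}), which ties those two factors together and which the splitting does not address.
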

\begin{proof}
    Let $K=\hat\beta$, $K':=K\sqcup O=\hat \beta'$, where $\beta\in B_{2n}$, and $\beta'\in B_{2n+2}$ is the image of natural inclusion $B_{2n} \hookrightarrow B_{2n+2}$ . We obtain that $\mathcal{S}_{K'} = \mathcal{S}_{K}\sqcup \{\text{secants}~(i,2n+1),(i,2n+2), (2n+1,i),(2n+2,i), (2n+1,2n+2), (2n+2,2n+1)\}$ and $ \mathcal{T}_{K'} = \mathcal{T}_{K}\sqcup \{\text{trisecants in [0,1] containing points } 2n+1,2n+2 \}$. In particular $ \mathcal{T}_{K'} - \mathcal{T}_{K}$ consists of trisecants in the form of $(i,j,2n+1)$, $(i,j,2n+2)$, $(2n+1,i,j)$ and $(2n+2,i,j)$.
    According to Part 1 of the proof in Lemma \ref{lemma4} (it is identical with this case), we have: 
    \begin{itemize}
        \item $(k,l)$ remains under actions from trisecants containing $2n+1$ or $2n+2$ for $1\leq k,l\leq2n$.
        \item $(i,2n+1)_{t_{s}} = (i,2n+2)_{t_{s}}$, for $s=0,\dots, a$.
        \item $(2n+1,i)_{t_{s}} = (2n+2,i)_{t_{s}}$, for $s=0,\dots, a$.
    \end{itemize}
    That is, it is possible to reduce generators and relations to
    \begin{eqnarray*}
        \mathcal{S'}_{K'} &=& \mathcal{S}_{K}\sqcup \{\text{secants}~(i,2n+1),(2n+1,i), (2n+1,2n+2), (2n+2,2n+1)\}\\
        \mathcal{T'}_{K'} &=& \mathcal{T}_{K}\sqcup \{(i,j,2n+1),(2n+1,i,j) \}.
    \end{eqnarray*}
    Let $\mathcal{S}_{0} = \{\text{secants}~(i,2n+1),(2n+1,i)\}$ and $\mathcal{S}_{1}= \mathcal{S'}_{K'} - \mathcal{S}_{0}$.
    Note that $(i,j,2n+1)$ and $(2n+1,i,j)$ provide relations:
    $$\begin{array}{ll}
        (i,2n+1)_{t_{s+1}} = (i,2n+1)_{t_{s}}\circ (i,j)_{t_{s}},\\
        (2n+1,i)_{t_{s+1}}= (2n+1,i)_{t_{s}}/ (2n+1,j)_{t_{s}},\\
        (j,2n+1)_{t_{s+1}} = (j,2n+1)_{t_{s}}/(j,i)_{t_{s}},\\
        (2n+1,j)_{t_{s+1}}= (2n+1,j)_{t_{s}}\circ  (2n+1,i)_{t_{s}}.
    \end{array}$$
Let $\mathcal{T}_{0}=\{(2n+1,i)_{t_{s+1}}= (2n+1,i)_{t_{s}}/ (2n+1,j)_{t_{s}},(2n+1,j)_{t_{s+1}}= (2n+1,j)_{t_{s}}\circ  (2n+1,i)_{t_{s}}\}$.
Since $\mathcal{T}_{K}$ has not relations containing $(2n+1,i)$, we obtain
$$\mathcal{T'}_{K'} = \mathcal{T}_{0} \sqcup \mathcal{T}_{1}, $$
where $\mathcal{T}_{1} = \mathcal{T'}_{K'}-\mathcal{T}_{0}$.
It follows that 
$$SQ(K\sqcup O) = \Gamma\langle\mathcal{T}_{0} \mid \mathcal{S}_{0} \rangle*\Gamma\langle\mathcal{T}_{1} \mid \mathcal{S}_{1} \rangle,$$
where `$*$' is the free product of two quandles.

On the other hand, by a stereographic projection at each height $t \in [-0,5,1.5]$ with the fixed point $2n+1$ onto a plane, we can obtain a knot diagram $D$ which is the plat closure of a braid in $B_{2n}$, see Figure~\ref{fig:stereographic-proj}.
\begin{figure}
    \centering
\includegraphics[width=0.5\linewidth]{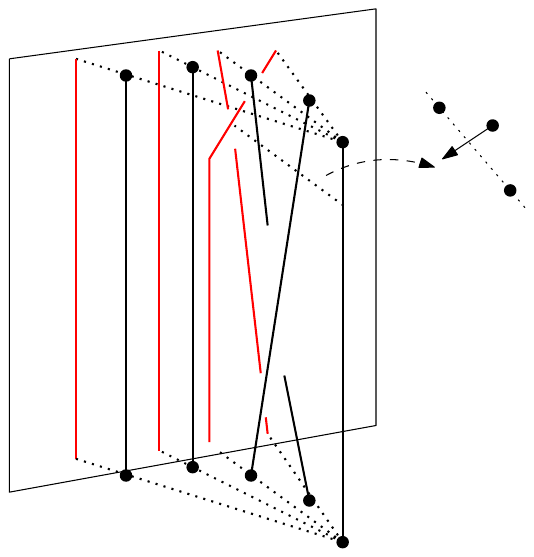}
    \put(-60,170){{\small $\Pi$}}
    \put(-170,167){{\footnotesize $(2n+1,i)$}}
    \put(-145,175){{\footnotesize $(2n+1,j)$}}
    \put(-110,169){{\footnotesize $(2n+1,l)$}}
    \put(-80,158){{\footnotesize $(2n+1,k)$}}
      \put(-60,140){{\small n+1}}
      \put(-30,120){{\small $P_{n+1}$}}
      \put(-30,150){{\small $P_{k}$}}
      \put(-10,146){{\small $P_{l}$}}
    \caption{We project arcs of a braid onto a plane $\Pi$ so that a point $P_{j}$ which is the intersection of the arc $j$ with a horisontal plane, is proejcted onto a plane from the point $P_{n+1}$. Then we obtain a braid diagram. In particular, each arc corresponds to a secant $(2n+1,j)$ and, when two points $P_{k},P_{l}$ are placed on a line with $P_{n+1}$, we obtain a classical crossing.}
    \label{fig:stereographic-proj}
\end{figure}
By abuse of notations, we denote the diagram by $\hat{\beta}$. From the diagram $\hat{\beta}$ we can obtain $Q(K)$ with the quandle presentation generated by arcs with relations coming from classical crossings. Each point on an arc $\alpha$ of $\hat{\beta}$ can be associated with $(2n+1,i)$ with respect to the construction of the knot diagram $\hat{\beta}$. Moreover, each crossing of $D$ comes from a trisecant $(2n+1,i,j)$ or $(i,j,2n+1)$ with respect to the sign of crossings. It induces a quandle homomorphism $\phi$ from $Q(K)$ to $\Gamma\langle \mathcal{T}_{0} \mid \mathcal{S}_{0} \rangle \subset SQ(K\cup O)$. 
It is clear that there exists one-to-one correspondence between generators of $Q(K)$ (relations of $Q(K)$, resp.) and $\mathcal{S}_{0}$ ($\mathcal{T}_{0}$, resp). Therefore, $Q(K)$ is isomorphic to $\Gamma\langle \mathcal{T}_{0} \mid \mathcal{S}_{0} \rangle$, which is subquandle of $SQ(K\sqcup O)$.
\end{proof}
%\begin{figure}
%    \centering
%    \includegraphics[width=1\linewidth]{figure108.pdf}
%    \caption{$SQ(K\sqcup O)$}
%    \label{fig:108}
%\end{figure}
From Theorem 3, we see that $SQ(K\sqcup O)$ is a free product of $Q(K)$ and a subquandle $R$ of $SQ(K \sqcup O)$. In the proof, classical crossings of a diagram are associated with trisecants. At the same time, there are trisecants that are not associated with any classical crossing of the diagram, but present ``hidden'' classical crossings. By the construction, the subquandle $R$ contains information from these hidden crossings. We expect that the additional part $R$ will provide topological properties of links.

\section*{Acknowledgments}
We are deeply grateful to I. N. Nikonov,  S. Özlem for their valuable suggestions and patient discussions.  Thank L. H. Kauffman for his invitation to show this work at his Quantum Topology Conference.

\end{document}